\newtheorem{theorem}{Theorem}
\newtheorem{lemma}{Lemma}
\newtheorem{prop}{Proposition}
\newtheorem{corollary}{Corollary}
\theoremstyle{definition}
\newtheorem{ex}{Example}
\newtheorem{defn}{Definition}
\newtheorem{remark}{Remark}
\def\multiset#1#2{\ensuremath{\left(\kern-.3em\left(\genfrac{}{}{0pt}{}{#1}{#2}\right)\kern-.3em\right)}}
\begin{document}

\title {Some Aspects of Higher Continued Fractions}
\author {Etan Basser, Nicholas Ovenhouse, and Anuj Sakarda}
\date{}

\maketitle

\abstract{
    We investigate some properties of the higher continued fractions defined recently by Musiker, Ovenhouse, Schiffler, and Zhang. We prove that the maps defining the higher continued fractions are increasing continuous functions on the positive real numbers. We also investigate some asymptotics of these maps.
}

\section{Introduction}

A new type of higher continued fractions was introduced in \cite{mosz}. The motivation for this definition came from combinatorial
considerations regarding the enumeration of higher dimer covers of certain planar graphs called \emph{snake graphs}, and an attempt to
generalize known relations between dimer covers of snake graphs and ordinary continued fractions. We will now describe some of these motivations in more detail, and then define the higher continued fractions.

In the theory of cluster algebras, there is the celebrated \emph{Laurent phenomenon}, which says that all cluster variables are Laurent polynomials when expressed in terms of the initial cluster variables \cite{fz_02}. For cluster algebras coming from triangulated surfaces, these Laurent polynomial expressions are known to be weighted sums
of perfect matchings (or \emph{dimer covers}) of planar graphs called \emph{snake graphs} \cite{ms_10} \cite{msw_11} \cite{msw_13}.

The geometric realization of these surface cluster algebras is the \emph{decorated Teichm\"{u}ller space} of the surface. The cluster variables are Penner's ``\emph{$\lambda$-lengths}''\cite{penner_book}, and the mutations are the hyperbolic version of Ptolemy's relation. The interpretation in terms of cluster algebras was explained in \cite{fomin_thurston} and \cite{gsv_weil_petersson}. The \emph{decorated super Teichm\"{u}ller space} of Penner and Zeitlin \cite{pz_19} was interpreted through the lens of cluster algebras in \cite{moz_21}, \cite{moz_22}, and \cite{moz_23}, where the authors proved a Laurent phenomenon for these super cluster structures by giving explicit Laurent polynomial expressions for super $\lambda$-lengths as weighted sums over double-dimer covers of snake graphs. This was one of the main original motivations for finding enumerative formulas for double (and higher) dimer covers in \cite{mosz}.

On the other hand, there is also an established relationship between these surface-type cluster algebras, their perfect matching Laurent formulas, and continued fractions \cite{cs_18}. In particular, a rational number defines a snake graph via its continued fraction expansion, and one of the main results of \cite{cs_18} is that the numerator of the rational number counts the perfect matchings of the snake graph. The authors of \cite{mosz} made an analogy between this known relationship and their new enumerative formulas for higher dimer covers to define their \emph{higher continued fractions}.

Before defining these higher continued fractions, we will briefly review the classical case. It is well-known that the convergents of
a continued fraction can be computed by multiplying certain $2 \times 2$ matrices. In particular, if $\frac{p}{q} = [a_1,\dots,a_n]$
and $\frac{p'}{q'} = [a_1,\dots,a_{n-1}]$, then
\[ 
    \begin{pmatrix} a_1 & 1 \\ 1 & 0 \end{pmatrix}
    \begin{pmatrix} a_2 & 1 \\ 1 & 0 \end{pmatrix}
    \cdots
    \begin{pmatrix} a_n & 1 \\ 1 & 0 \end{pmatrix}
    =
    \begin{pmatrix} p & p' \\ q & q' \end{pmatrix}
\]
Another representation of this matrix product involves the three matrices
\[ 
    R = \begin{pmatrix} 1&1 \\ 0&1 \end{pmatrix}, \quad \quad
    L = \begin{pmatrix} 1&0 \\ 1&1 \end{pmatrix}, \quad \quad
    W = \begin{pmatrix} 0&1 \\ 1&0 \end{pmatrix}.
\]
If $n$ is even, then the matrix product above is equal to $R^{a_1}L^{a_2}R^{a_3}L^{a_4}\cdots L^{a_n}$,
and if $n$ is odd, then it is equal to $R^{a_1}L^{a_2} \cdots R^{a_n}W$. The most straightforward definition
of the higher continued fractions from \cite{mosz} is by replacing these matrix products with certain $(m+1) \times (m+1)$ counterparts,
where $m \geq 1$. Let $R_m$ and $L_m$ be the upper and lower triangular matrices with all 1's, $W_m$ the permutation matrix
for the longest permutation in the symmetric group $S_{m+1}$, and for an integer $a$ a matrix $\Lambda_m(a)$:
\[ 
    R_m = \begin{pmatrix} 
    1 & 1 & \cdots & 1 \\
    0 & 1 & \cdots & 1 \\
    0 & 0 & \ddots & \vdots \\
    0 & 0 & \cdots & 1
    \end{pmatrix}, \quad \quad
    L_m = \begin{pmatrix} 
    1 & \cdots & 0 & 0 \\
    \vdots & \ddots & 0 & 0 \\
    1 & \cdots & 1 & 0 \\
    1 & \cdots & 1 & 1
    \end{pmatrix}, \quad \quad
    W_m = \begin{pmatrix}
    0      & 0      & 0 & 1 \\
    0      & 0      & 1 & 0 \\
    \vdots & \hspace{-2ex}\rotatebox{90}{$\ddots$} & 0 & 0 \\
    1      & \cdots & 0 & 0
    \end{pmatrix}, 
\]
\[
    \Lambda_m(a) = R_m^a W_m = W_m L_m^a = \begin{pmatrix}
        \multiset{a}{m}   & \multiset{a}{m-1} & \cdots & a &  1 \\
        \multiset{a}{m-1} & \multiset{a}{m-2} & \cdots & 1 & 0 \\
        \vdots            & \vdots            & \hspace{-2ex}\rotatebox{90}{$\ddots$} & 0 & 0 \\
        a                 & 1                 & 0      & \hspace{-2ex}\rotatebox{90}{$\ddots$} & 0 \\
        1                 & 0                 & 0      & 0 & 0
    \end{pmatrix}
\]
Here, the notation $\multiset{n}{k}$ (``$n$ \emph{multichoose} $k$'') denotes the binomial coefficient $\binom{n+k-1}{k}$,
which is the number of multisets of size $k$ whose elements come from the set $\{1,2,\dots,n\}$.

A family of maps $r_{i,m} \colon \Bbb{R}_{\geq 1} \to \Bbb{R}_{\geq 1}$ was defined in \cite{mosz} as follows. 

\begin {defn} \label{def:CF}
For a rational number $x \geq 1$
with continued fraction $x = [a_1,\dots,a_n]$, let $M$ be the following matrix product:
\[ M = \Lambda_m(a_1) \Lambda_m(a_2) \cdots \Lambda_m(a_n). \] 
We define $\mathrm{CF}_m(x)$ to be the first column of $M$, normalized so that the last entry is 1 (i.e. divided by $M_{m+1,1}$). 
The entries of this vector are denoted by $r_{i,m}(x)$:
\[ \mathrm{CF}_m(x) = \left( r_{m,m}(x), r_{m-1,m}(x), \dots, r_{2,m}(x), r_{1,m}(x), 1 \right)^\top \]
\end {defn}

\begin {remark}
It is also sometimes convenient to think of $\mathrm{CF}_m(x)$ as the homogeneous coordinates of a point in the projective
space $\Bbb{P}^m$, and the $r_{i,m}(x)$ are the affine coordinates in the chart where the last coordinate is non-zero. This allows one to deal with the matrix $M$ directly without dividing by the bottom-left entry.
\end {remark}

\begin {ex}
    For $m=1$, we always have $r_{1,1}(x) = x$, and $\mathrm{CF}_1(x) = (x,1)^\top$.
\end {ex}

\begin {ex}
    The continued fraction for $\frac{12}{7}$ is $[1,1,2,2]$. The vector $\mathrm{CF}_2(12/7)$ is therefore computed using the matrix product 
    \[ \Lambda_2(1)^2\Lambda_2(2)^2 = \begin{pmatrix} 61 & 36 & 14 \\ 47 & 28 & 11 \\ 25 & 15 & 6 \end{pmatrix} \]
    Therefore the values of $r_{i,2}(12/7)$ are given by
    \[ \mathrm{CF}_2\left( \frac{12}{7} \right) = \left( r_{2,2} \left( \frac{12}{7} \right), \, r_{1,2} \left( \frac{12}{7} \right), \, r_{0,2} \left( \frac{12}{7} \right) \right) = \left( \frac{61}{25}, \, \frac{47}{25}, \, 1 \right)^\top \]
\end {ex}

One of the main results in \cite{mosz} was that this definition extends to real (rather than rational) values of $x$.

\medskip

\begin {theorem} (\cite{mosz})
If $x = [a_1,a_2,\dots]$ is the continued fraction for an irrational number, and if $x_n = [a_1,\dots,a_n]$ are its rational convergents, then the sequence $r_{i,m}(x_n)$ converges.
\end {theorem}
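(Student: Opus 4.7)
The plan is to view each $\mathrm{CF}_m(x_n)$ as a point in the projective space $\mathbb{P}^m$ and to establish convergence via a contraction argument for Hilbert's projective metric on the positive cone. Writing $M^{(n)} = \Lambda_m(a_1) \cdots \Lambda_m(a_n)$, the point $\mathrm{CF}_m(x_n)$ is the image of the standard basis vector $e_1$ under the projective map induced by $M^{(n)}$, so convergence of the affine coordinates $r_{i,m}(x_n)$ is equivalent to convergence of this projective point.

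First I would record two elementary positivity facts about $\Lambda_m(a)$ for $a \geq 1$: all its entries are non-negative (since multichoose numbers are binomial coefficients), and both the first row and the first column are strictly positive. As an immediate consequence, any product $\Lambda_m(a)\Lambda_m(b)$ has \emph{all} entries strictly positive, because the $(i,j)$ entry $\sum_k (\Lambda_m(a))_{ik}(\Lambda_m(b))_{kj}$ always picks up a positive contribution at $k=1$.

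Next, let $C_n \subseteq \mathbb{P}^m$ be the image of the projectivized positive cone under $M^{(n)}$. The $C_n$ form a nested decreasing family of compact convex subsets of the open positive simplex, and $[\mathrm{CF}_m(x_k)] \in C_n$ for every $k \geq n$. By Birkhoff's contraction theorem, multiplication by a strictly positive matrix is a strict contraction in Hilbert's projective metric; hence the action of each two-step product $\Lambda_m(a_{2k-1})\Lambda_m(a_{2k})$ (which is strictly positive by the observation above) strictly reduces the Hilbert diameter of the corresponding cone.

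The main technical obstacle is to show that the Hilbert diameter of $C_n$ genuinely tends to $0$, rather than just to some positive limit. This would follow from a \emph{uniform} Birkhoff contraction bound, independent of the particular sequence $(a_k)$: one needs to show that the projective diameter $\max_{i,j,k,l} \log \bigl( A_{ik} A_{jl} / (A_{il} A_{jk}) \bigr)$ of $A = \Lambda_m(a)\Lambda_m(b)$ is bounded by a constant depending only on $m$. A direct computation in the $m=1$ case already yields such a bound ($\leq \log 2$, by telescoping the multichoose entries), and I would expect the same phenomenon to persist for general $m$ after a careful, somewhat technical expansion of the matrix product. Once a uniform contraction factor $\rho < 1$ is in hand, the Hilbert diameter of $C_n$ decays geometrically, the sequence $[\mathrm{CF}_m(x_n)]$ is Cauchy and hence convergent in Hilbert's metric, and convergence in this metric implies convergence of the affine coordinates $r_{i,m}(x_n)$.
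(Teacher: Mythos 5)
This theorem is quoted from \cite{mosz} (it is the convergence result, Theorem 6.11 there) and the present paper gives no proof of it, so there is no internal argument to compare yours against; your proposal has to stand on its own. Its skeleton is sound: the positivity facts about $\Lambda_m(a)$ for $a\geq 1$ are correct (all entries on or above the anti-diagonal are at least $1$, in particular the first row and column, so any product of two $\Lambda$-matrices is entrywise strictly positive), and Birkhoff's theorem then does reduce everything to a uniform bound on the projective diameter of the two-step products. But that bound is exactly where your write-up stops being a proof: you verify it only for $m=1$ and ``expect'' it in general, and without it Birkhoff only yields contraction factors that could tend to $1$ along the sequence, which is not enough. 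The good news is that the bound is true and short. Writing $A=\Lambda_m(a)\Lambda_m(b)$, the entries are $A_{ij}=\sum_{s\geq 0}\multiset{a}{m+1-i-s}\multiset{b}{m+1-j-s}$, and since $\multiset{a}{k}/\multiset{a}{k-1}=(a+k-1)/k\geq 1$ for $a\geq 1$, every term is at most the $s=0$ term; as there are at most $m+1$ nonzero terms, $\multiset{a}{m+1-i}\multiset{b}{m+1-j}\leq A_{ij}\leq (m+1)\multiset{a}{m+1-i}\multiset{b}{m+1-j}$. Hence every cross-ratio $A_{ik}A_{jl}/(A_{il}A_{jk})$ is at most $(m+1)^2$, the projective diameter of $A$ is at most $2\log(m+1)$ uniformly in $a,b\geq 1$, and Birkhoff gives the uniform contraction factor $\tanh\left(\tfrac{1}{2}\log(m+1)\right)<1$ that your argument needs. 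With this inserted, the nested images do have Hilbert diameters decaying geometrically and the limit point is interior, so the affine coordinates $r_{i,m}(x_n)$ converge.

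Two smaller things to tidy when you write this up: the image $C_1$ of the projectivized cone under a single $\Lambda_m(a_1)$ is \emph{not} contained in the open simplex (it contains the class of $e_1$, since the last column of $\Lambda_m(a)$ is $e_1$), so the nesting and diameter estimates should be phrased for the even-index products $C_{2n}$, as your two-step blocking implicitly does; and you should state explicitly that because all the points lie in the compact set $C_2$, which is a compact subset of the open simplex, Hilbert-metric convergence implies convergence of the coordinates normalized by the last entry (the last coordinates stay bounded away from $0$), which is what the statement about $r_{i,m}(x_n)$ actually requires.
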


\medskip

\begin {defn}
If $x = [a_1,a_2,\dots]$ is irrational, we define $r_{i,m}(x)$ as the limit $\lim_{n \to \infty} r_{i,m}(x_n)$.
\end {defn}

\medskip

\begin {ex}
    Let $f_n$ be the sequence of Fibonacci numbers. It is well-known that the sequence of ratios $x_n = \frac{f_n}{f_{n-1}}$ have continued fractions with all $1$'s: $x_n = [1,1,1,\dots,1]$, and that $\lim_{n \to \infty} x_n = \varphi = \frac{1}{2}(1+\sqrt{5})$, the golden ratio. The sequence $r_{2,2}(x_n)$ is the following sequence of rationals (and their approximate decimal values):
    \[ 3, \quad 2, \quad \frac{14}{6}, \quad \frac{31}{14}, \quad \frac{70}{31}, \quad \frac{157}{70}, \quad \dots \]
    \[ 3, \quad 2, \quad 2.333, \quad 2.214, \quad 2.258, \quad 2.243, \quad \dots \]
    This sequence converges to
    \[ r_{2,2}(\varphi) = 4\cos^2(\pi/7)-1 \approx 2.24698 \]
\end {ex}

In \cite{mosz}, it was conjectured based on computer experiments that each $r_{i,m}$ is an increasing and continuous function.
The two main results of this work (Theorem \ref{thm1} and Theorem \ref{thm2}) establish the monotonicity and continuity for positive numbers.
Another open problem is to find an inverse to the $r_{i,m}$ maps (if one exists), giving some generalization of the usual algorithm
for computing a continued fraction representation. While we do not attempt to find an inverse in the present work, we note that
our results that $r_{i,m}$ is strictly increasing and continuous imply that it is a bijection $[0,\infty) \to [0,\infty)$, and so we can conclude that such an
inverse does exist on this domain.

\section{Monotonicity}

In this section, we will prove that the maps $r_{i,m}$ are strictly increasing.

\bigskip

\begin{lemma} \label{lem:first_lemma}
    For positive integers $a,m,i,j,k$, with $j < k$ and $1 \leq i,j,k \leq m+1$, we have
    \[ \multiset{a}{m+2-(i+k)} \multiset{a}{m+1-(i+j)} \geq \multiset{a}{m+2-(i+j)}\multiset{a}{m+1-(i+k)}. \]
    If $a > 1$ and $j,k \leq m+2-i$, then the inequality is strict.
\end{lemma}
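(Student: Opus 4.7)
I would recognize the inequality as an instance of log-concavity of the sequence $k \mapsto \multiset{a}{k}$, plus a short boundary check. First, introduce the shorthand $p = m+1-(i+j)$ and $q = m+1-(i+k)$, so that the claim becomes
\[ \multiset{a}{q+1}\,\multiset{a}{p} \;\geq\; \multiset{a}{p+1}\,\multiset{a}{q}, \]
with $p > q$ (since $j < k$). The additional hypothesis $j,k \leq m+2-i$ used for strictness is equivalent to $q \geq -1$, and together with $p > q$ it also forces $p \geq 0$.

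Adopting the convention $\multiset{a}{\ell} = 0$ for $\ell < 0$ (consistent with the zero entries of $\Lambda_m(a)$), I would first dispose of the degenerate indices. When $q \leq -2$, both sides vanish; when $q = -1$, the right-hand side is $0$ while the left-hand side equals $\multiset{a}{p} \geq 1$, giving a strict inequality in the regime where strictness is asserted.

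The main case is $q \geq 0$, where all four multichoose coefficients are positive, so the claim is equivalent to
\[ \frac{\multiset{a}{q+1}}{\multiset{a}{q}} \;\geq\; \frac{\multiset{a}{p+1}}{\multiset{a}{p}}. \]
A one-line computation gives
\[ \frac{\multiset{a}{k+1}}{\multiset{a}{k}} \;=\; \frac{a+k}{k+1} \;=\; 1 + \frac{a-1}{k+1}, \]
which is weakly decreasing in $k$ and strictly decreasing exactly when $a > 1$. Since $q < p$, this yields the desired inequality, with strict inequality precisely under the hypothesis $a > 1$.

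I do not anticipate any serious obstacle; the content is just the log-concavity of the row $k \mapsto \binom{a+k-1}{k}$, repackaged through the substitution above. The only care needed is in tracking when one or both of $\multiset{a}{q}, \multiset{a}{q+1}$ fall into the negative-index regime, which is handled by the short boundary argument.
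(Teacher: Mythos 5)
Your proposal is correct and follows essentially the same route as the paper: after a change of variables it reduces the inequality to the monotonicity of the ratio $\multiset{a}{k+1}/\multiset{a}{k} = 1+\frac{a-1}{k+1}$, with the same handling of the negative-index boundary cases (your treatment of the $q=-1$ case is in fact slightly more explicit about strictness than the paper's). No gaps.
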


\begin{proof}
For ease of notation, let us make the substitution $r=m+2-i$. Therefore, we need to show that \[ \multiset{a}{r-k} \multiset{a}{r-j-1} \geq \multiset{a}{r-j}\multiset{a}{r-k-1} \]

First let us get some degenerate cases out of the way. If either of $j$ or $k$ is greater than $r$, then the inequality simply becomes $0 \geq 0$. In the case when $k = r$, the inequality simply says $\multiset{a}{r-1-j} \geq 0$. For the rest of the proof, we assume that $j < k \leq r-1$.

Note that $\frac{\multiset{a}{r-k}}{\multiset{a}{r-k-1}}=\frac{\binom{a+r-k-1}{r-k}}{\binom{a+r-k-2}{r-k-1}} = \frac{a+r-k-1}{r-k} = 1 + \frac{a-1}{r-k}$. Similarly, $\frac{\multiset{a}{r-j}}{\multiset{a}{r-j-1}}= 1 + \frac{a-1}{r-j}$. Since $j < k$, this means that  $\frac{\multiset{a}{r-k}}{\multiset{a}{r-k-1}}\geq{\frac{\multiset{a}{r-j}}{\multiset{a}{r-j-1}}}$, with equality only if $a=1$. Finally cross-multiplying gives the desired result. 

\end{proof}

\begin{lemma} \label{lemma1}
    Suppose we have two vectors $X = (x_1,x_2,\dots,x_{m+1})$ and $Y = (y_1,y_2,\dots,y_{m+1})$ with all positive entries, such that $\frac{x_i}{x_{i+1}} \geq \frac{y_i}{y_{i+1}}>{0}$ for all $ 1 \leq i \leq m$. 
    Also let $a \in \Bbb{N}$ and define $X' = \Lambda_m(a) X$ and $Y' = \Lambda_m(a) Y$.
    Then $\frac{y'_i}{y'_{i+1}} \geq \frac{x'_i}{x'_{i+1}}>{0}$ for all $1 \leq i \leq m$.
    In other words, multiplication by a $\Lambda$-matrix reverses these inequalities.

    Furthermore, if the hypothesized inequality $\frac{x_1}{x_{2}} > \frac{y_1}{y_{2}}$ is strict for $i=1$, 
    then $\frac{y'_i}{y'_{i+1}} > \frac{x'_i}{x'_{i+1}}$ is strict for all $1 \leq i \leq m$.

\end{lemma}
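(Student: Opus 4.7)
The plan is to show that, for each $1 \leq i \leq m$, the quantity $y'_i x'_{i+1} - x'_i y'_{i+1}$ is non-negative (and strictly positive under the additional hypothesis). Setting $A_j = \multiset{a}{m+2-(i+j)}$ and $B_j = \multiset{a}{m+1-(i+j)}$ (with the convention $\multiset{a}{\ell}=0$ for $\ell<0$), the definition of $\Lambda_m(a)$ gives
\[ x'_i = \sum_{j=1}^{m+1} A_j x_j, \qquad x'_{i+1} = \sum_{j=1}^{m+1} B_j x_j, \]
and analogously for $y$. Positivity of $x'_i, y'_i$ is immediate, since $A_1 = \multiset{a}{m+1-i} \geq 1$ and all $x_j, y_j$ are positive.

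Next I would expand the cross-difference as a double sum and symmetrize over the unordered pair $\{j,k\}$:
\[ y'_i x'_{i+1} - x'_i y'_{i+1} = \sum_{j,k} A_j B_k (y_j x_k - x_j y_k) = \sum_{j<k} (A_j B_k - A_k B_j)(y_j x_k - y_k x_j). \]
Each of the two factors in every summand is non-positive: Lemma \ref{lem:first_lemma} gives $A_j B_k \leq A_k B_j$, and telescoping the ratio hypothesis yields $x_j/x_k \geq y_j/y_k$ for $j < k$, i.e.\ $y_j x_k - y_k x_j \leq 0$. Their product is therefore termwise $\geq 0$, so the sum is $\geq 0$, proving the reversed weak inequality $y'_i/y'_{i+1} \geq x'_i/x'_{i+1}$.

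For the strict case, the natural pair to single out is $(j,k) = (1, m+2-i)$, which lies in the summation range $j<k$ precisely because $i \leq m$. At this pair $A_{m+2-i} = \multiset{a}{0} = 1$ and $B_{m+2-i} = \multiset{a}{-1} = 0$, so
\[ A_1 B_{m+2-i} - A_{m+2-i} B_1 = -\multiset{a}{m-i} \leq -1 < 0 \]
for any $a \geq 1$. The telescoping expansion of $x_1/x_{m+2-i}$ as a product of consecutive ratios begins with the strict ratio $x_1/x_2 > y_1/y_2$, so $y_1 x_{m+2-i} - y_{m+2-i} x_1 < 0$ strictly as well. This particular summand is therefore strictly positive, while all remaining summands are still $\geq 0$, so the whole difference is strictly positive.

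The main obstacle I anticipate is handling the strict inequality uniformly in $a$. A first attempt focusing on the pair $(j,k)=(1,2)$ would invoke the strict part of Lemma \ref{lem:first_lemma}, which needs $a>1$ and fails at $a=1$ for $i<m$. Choosing the extremal pair $(1,m+2-i)$ instead exploits the vanishing $B_{m+2-i}=0$ to make the combinatorial factor strictly negative for \emph{every} $a \geq 1$, unifying the two cases and isolating the single place where the strict hypothesis on ratios is actually used.
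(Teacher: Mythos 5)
Your proof is correct and follows essentially the same route as the paper's: the same symmetrized double-sum expansion of the cross-difference, the same appeal to Lemma \ref{lem:first_lemma} for the sign of the coefficient factor, and the same extremal pair $(j,k)=(1,m+2-i)$, exploiting $\multiset{a}{-1}=0$, to get strictness uniformly in $a$. No gaps worth noting.
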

\begin{proof}
    Note that for all $i$, we have 
    \[ x'_i = (\Lambda_m(a)X)_i = {\sum_{j=1}^{m+1} \multiset{a}{m+2-(i+j)}x_j} \] 
    and similarly for $Y'$. Thus, the desired inequality $\frac{y'_i}{y'_{i+1}} \geq \frac{x'_i}{x'_{i+1}}$ is equivalent to
    \[ \frac{\sum_{j=1}^{m+1} \multiset{a}{m+2-(i+j)}y_j}{\sum_{j=1}^{m+1} \multiset{a}{m+1-(i+j)}y_j} \geq \frac{\sum_{j=1}^{m+1} \multiset{a}{m+2-(i+j)}x_j}{\sum_{j=1}^{m+1} \multiset{a}{m+1-(i+j)}x_j} \]
    or equivalently
    \[ \sum_{j,k=1}^{m+1} \multiset{a}{m+2-(i+j)}y_j\multiset{a}{m+1-(i+k)}x_k \geq \sum_{j,k=1}^{m+1} \multiset{a}{m+2-(i+j)}x_j\multiset{a}{m+1-(i+k)}y_k \]
    We will prove a stronger statement; namely that for all $1 \leq j \leq k \leq m+1$, we have 
    \begin{multline*}
      \multiset{a}{m+2-(i+j)}y_j\multiset{a}{m+1-(i+k)}x_k + \multiset{a}{m+2-(i+k)}y_k\multiset{a}{m+1-(i+j)}x_j \geq \\ 
      \multiset{a}{m+2-(i+j)}x_j\multiset{a}{m+1-(i+k)}y_k + \multiset{a}{m+2-(i+k)}x_k\multiset{a}{m+1-(i+j)}y_j.
    \end{multline*}
    Assuming this, then by summing over all pairs $j \leq k$, we will obtain the desired inequality above.
    By rearranging, we equivalently need to show that 
    \[ \multiset{a}{m+2-(i+k)}\multiset{a}{m+1-(i+j)}(x_jy_k-x_ky_j) \geq \multiset{a}{m+2-(i+j)}\multiset{a}{m+1-(i+k)}(x_jy_k-x_ky_j) \]
    Since we assume that $\frac{x_i}{x_{i+1}}\geq\frac{y_i}{y_{i+1}}$ for all $i$, and because $\frac{x_j}{x_k} = \frac{x_j}{x_{j+1}} \frac{x_{j+1}}{x_{j+2}} \cdots \frac{x_{k-1}}{x_k}$,
    we find that $\frac{x_j}{x_k} \geq \frac{y_j}{y_k}$ whenever $j < k$.
    Since $x_jy_k - x_ky_j \geq 0$, we can cancel this factor from both sides, and this becomes the inequality from Lemma \ref{lem:first_lemma}.

Now, additionally assume that  $\frac{x_1}{x_{2}} > \frac{y_1}{y_{2}}$. To show strictness, since we are summing over separate inequalities, it suffices to show there exist some $1\leq{j}<k\leq{m+1}$ such that
\[ \multiset{a}{m+2-(i+k)}\multiset{a}{m+1-(i+j)}(x_jy_k-x_ky_j) > \multiset{a}{m+2-(i+j)}\multiset{a}{m+1-(i+k)}(x_jy_k-x_ky_j) \] 
Indeed, take $k=m+2-i$ and $j=1$. Then 
\[ \multiset{a}{m+2-(i+k)}\multiset{a}{m+1-(i+j)} = \multiset{a}{0} \multiset{a}{m-i} = \multiset{a}{m-i} \geq 1, \quad \text{and} \] 
\[ \multiset{a}{m+2-(i+j)}\multiset{a}{m+1-(i+k)} = \multiset{a}{m+1-i} \multiset{a}{-1} = 0. \] 
Thus it suffices to show that $x_jy_k-x_ky_j>0$ -- i.e., $\frac{x_j}{x_k} > \frac{y_j}{j_k}$. But this holds because by assumption $\frac{x_1}{x_{2}} > \frac{y_1}{y_{2}}$ and $\frac{x_i}{x_{i+1}} \geq \frac{y_i}{y_{i+1}}>{0}$ for all $ 1 \leq i \leq m$.

\end{proof}

\begin{corollary}
\label{cor1}
    Let $a_1,\dots,a_n$ be a sequence of positive integers, and let $M = \Lambda_m(a_1)\Lambda_m(a_2) \cdots \Lambda_m(a_n)$. 
    Take $X$ and $Y$ as in Lemma \ref{lemma1}, and let $X' = MX$ and $Y' = MY$.
    Then for all $i$, we have
    $\frac{x'_i}{x'_{i+1}} \geq \frac{y'_i}{y'_{i+1}}$ if $n$ is even, and $\frac{x'_i}{x'_{i+1}} \leq \frac{y'_i}{y'_{i+1}}$ if $n$ is odd. Furthermore, if in addition $\frac{x_1}{x_{2}} > \frac{y_1}{y_{2}}$, then the strict inequalities hold.
\end{corollary}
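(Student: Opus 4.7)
The plan is to induct on $n$, the number of $\Lambda$-matrix factors. The base case $n=1$ is exactly the content of Lemma \ref{lemma1}. For the inductive step, I would factor $M = \Lambda_m(a_1) \cdot N$ where $N = \Lambda_m(a_2) \cdots \Lambda_m(a_n)$, and set $\tilde{X} = NX$, $\tilde{Y} = NY$. By the inductive hypothesis applied to the $n-1$ factors making up $N$, the vectors $\tilde{X}, \tilde{Y}$ have all positive entries, and the ratios $\tilde{x}_i/\tilde{x}_{i+1}$ compare to $\tilde{y}_i/\tilde{y}_{i+1}$ in the direction determined by the parity of $n-1$.

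Since $X' = \Lambda_m(a_1)\tilde{X}$ and $Y' = \Lambda_m(a_1)\tilde{Y}$, one more application of Lemma \ref{lemma1} (invoked with the roles of $\tilde{X}$ and $\tilde{Y}$ swapped if the parity of $n-1$ has made $\tilde{Y}$ the dominating vector) flips the inequality once more, yielding the direction predicted by the parity of $n$. Conceptually, each $\Lambda$-multiplication reverses the ratio inequalities, so after $n$ multiplications the direction is reversed $n$ times relative to the original.

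For the strict version, the crucial observation is that the strict form of Lemma \ref{lemma1} produces strict inequalities at \emph{every} index $i$ after a single $\Lambda$-application, not merely at $i=1$. Thus after the innermost multiplication by $\Lambda_m(a_n)$, strict inequalities hold throughout, and in particular at $i=1$, which is precisely the hypothesis required to invoke the strict form of Lemma \ref{lemma1} again at each subsequent step; strictness therefore propagates through the induction. I foresee no serious obstacle, as this is essentially a parity-tracking bookkeeping argument built on Lemma \ref{lemma1}; the only subtlety is that Lemma \ref{lemma1}'s fixed directional statement requires relabeling $\tilde{X}$ and $\tilde{Y}$ in the inductive step whenever parity has reversed the domination direction.
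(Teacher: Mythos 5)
Your proposal is correct and follows essentially the same route as the paper: induction on the number of $\Lambda$-factors, with Lemma \ref{lemma1} supplying the inductive step that reverses the ratio inequalities at each multiplication, and with strictness propagating because the strict form of Lemma \ref{lemma1} yields strict inequalities at every index, in particular at $i=1$. The paper states this more tersely but the argument is the same, including the implicit role-swapping of the dominating vector at each step.
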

\begin{proof}
    When $n=0$, the inequality holds by assumption on $X$ and $Y$. The inductive step is given by Lemma \ref{lemma1}, which says that the inequality
    is reversed with each multiplication by another $\Lambda_m(a)$ matrix.
\end{proof}

\begin{lemma} \label{lemma2}
    Let $x \geq 1$ be a real number, and let $n = \lfloor x \rfloor$ be the integer part. For all $m \geq 1$ and $1 \leq i \leq m$, we have 

$$\begin{cases}
\displaystyle \frac{r_{i,m}(x)}{r_{i-1,m}(x)} = \frac{n-1+i}{i}\hspace{0.3cm} & \text{ if } x = n \in{\mathbb{N}}\\[2ex]
  \displaystyle \frac{n-1+i}{i} < \frac{r_{i,m}(x)}{r_{i-1,m}(x)} < \frac{n+i}{i}\hspace{0.3cm} & \text{ otherwise } 
\end{cases}$$
\end{lemma}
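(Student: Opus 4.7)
My plan is to split into the integer case and the non-integer case. If $x = n \in \mathbb{N}$, then $M = \Lambda_m(n)$ and its first column $V_n := (\multiset{n}{m}, \multiset{n}{m-1}, \ldots, 1)^\top$ is already normalized, so the claim follows from the direct computation $\multiset{n}{i}/\multiset{n}{i-1} = (n+i-1)/i$. For non-integer $x \in (n, n+1)$, I would write the continued fraction as $[n, a_2, a_3, \ldots]$ and let $y = [a_2, a_3, \ldots] > 1$ be the tail, so that $v_x = \Lambda_m(n) v_y$ (up to scaling; the irrational case is handled by taking limits of rational convergents and using continuity of matrix multiplication). The proof then reduces to two properties of $v_y$: \emph{positivity}, meaning $(v_y)_k > 0$ for all $k$, and a \emph{decay} inequality $(v_y)_k > (v_y)_{k+1}$ strictly for all $k$.

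Given these, the upper bound $r_{i,m}(x)/r_{i-1,m}(x) < (n+i)/i$ follows from the strict form of Lemma \ref{lemma1} applied with $X = v_y$ and $Y = V_1 := (1,\ldots,1)^\top$: decay supplies the strict ratio inequality $(v_y)_k/(v_y)_{k+1} > 1 = (V_1)_k/(V_1)_{k+1}$, and the hockey-stick identity $\sum_{p=0}^{r}\multiset{n}{p} = \multiset{n+1}{r}$ shows $\Lambda_m(n) V_1 = V_{n+1}$, so the conclusion gives the desired bound after reindexing $i = m+1-k$. For the lower bound $r_{i,m}(x)/r_{i-1,m}(x) > (n+i-1)/i$, I would compute $(v_x)_k(V_n)_{k+1} - (v_x)_{k+1}(V_n)_k$ directly. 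Expanding $v_x = \Lambda_m(n) v_y$ and setting $p = m+1-k$, this becomes $\sum_j (v_y)_j \bigl[\multiset{n}{p+1-j}\multiset{n}{p-1} - \multiset{n}{p-j}\multiset{n}{p}\bigr]$; the interior terms ($2 \leq j \leq p$) are nonnegative by log-concavity of the multichoose sequence (a direct consequence of Lemma \ref{lem:first_lemma}), and the $j = p+1$ term equals $\multiset{n}{p-1}(v_y)_{p+1} > 0$, so the inequality is strict.

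To establish positivity and decay for $v_y$, I would use the analogous recursive structure $v_y = \Lambda_m(b_1) v_w$ with $b_1 = \lfloor y \rfloor$. Positivity is preserved under multiplication by $\Lambda_m(b_1)$ because its bottom row sends $v \mapsto v_1$, and decay reduces, via the identity $\multiset{a}{p} - \multiset{a}{p-1} = \multiset{a-1}{p}$, to the explicit formula
\[
(v_y)_k - (v_y)_{k+1} = (v_w)_{m+2-k} + \sum_{j=1}^{m+1-k}\multiset{b_1-1}{m+2-k-j}(v_w)_j,
\]
which is strictly positive whenever $v_w > 0$. The main obstacle is avoiding circularity in the irrational case: I would first establish decay for all rational $y > 1$ by induction on continued fraction length, which implies $r_{i,m}(y_l) > 1$ uniformly along rational convergents and yields strict positivity of $v_y$ in the irrational limit; decay for irrational $y$ then follows from the same explicit formula applied with the now-established $v_w > 0$.
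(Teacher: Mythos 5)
Your proposal is correct, and it reaches the two bounds by a partly different route than the paper. The integer case and the shape of the upper bound coincide: like the paper, you apply the strict case of Lemma \ref{lemma1} to the pair consisting of the tail vector and the all-ones vector, and use the hockey-stick identity to evaluate $\Lambda_m(n)(1,\dots,1)^\top$ as the vector of multichooses of $n+1$. The differences are in the scaffolding and in the lower bound. For the hypothesis of Lemma \ref{lemma1}, the paper bootstraps: it first proves the lower bound by a direct manipulation of the ratio (dropping the positive term $r_{m-i,m}(x')$ and using that $\tfrac{a_1+k-1}{k}$ is decreasing in $k$), and then applies that lower bound to the tail $x'$ to get $r_{m,m}(x')>r_{m-1,m}(x')$, which is all the strictness clause of Lemma \ref{lemma1} needs. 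You instead prove a standalone strict-decay property of $\mathrm{CF}_m(y)$ via the Pascal-type telescoping identity $\multiset{a}{p}-\multiset{a}{p-1}=\multiset{a-1}{p}$, together with an explicit treatment of positivity along rational convergents to cover irrational tails; and your lower bound is obtained by expanding the cross-product $(v_x)_k(V_n)_{k+1}-(v_x)_{k+1}(V_n)_k$ termwise, with the interior terms controlled by the log-concavity inequality of Lemma \ref{lem:first_lemma} and strictness coming from the boundary term $\multiset{n}{p-1}(v_y)_{p+1}>0$ --- essentially a sharpened, specialized rerun of the exchange argument inside Lemma \ref{lemma1}. What each buys: the paper's bootstrap is shorter and needs no separate decay lemma, though it leaves the positivity of the limiting vector for irrational tails implicit; your version is more modular, makes the positivity and strict-decay facts explicit (which is the genuinely delicate point in the irrational case, handled correctly by your limit argument), at the cost of extra machinery. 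One tiny slip worth noting: along the convergents of an irrational tail with $\lfloor y\rfloor=1$ the first convergent is $y_1=1$, where decay fails and $r_{i,m}(y_1)=1$ rather than $>1$; this is harmless since $\geq 1$ is all you need to conclude positivity of the limit.
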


\begin{proof}
First, when $x = n$ is an integer, it is clear that $r_{i,m}(n) = \multiset{n}{i}$, and so the ratio is $\frac{r_{i,m}(n)}{r_{i-1,m}(n)} = \frac{n-1+i}{i}$.

Let $[a_1,a_2,\dots]$ be the continued fraction for $x$, and let $x' = [a_2,a_3,\dots]$.
Then $$\mathrm{CF}_m(x) = \frac{1}{r_{m,m}(x')}\Lambda_m(a_1) \mathrm{CF}_m(x'),$$and therefore the ratio we are interested in is
$\frac{\left( \Lambda_m(a_1) \mathrm{CF}_m(x') \right)_{m+1-i}}{\left( \Lambda_m(a_1) \mathrm{CF}_m(x') \right)_{m+2-i}}$.
We prove the left inequality first. 
\begin{align*}
    \frac{r_{i,m}(x)}{r_{i-1,m}(x)} 
    &= \frac{(\Lambda_m(a_1) \mathrm{CF}_m(x'))_{m+1-i}}{(\Lambda_m(a_1)\mathrm{CF}_m(x'))_{m+2-i}} \\
    &= \frac{\sum_{k=0}^i \multiset{a_1}{k}r_{m-i+k,m}(x')}{\sum_{k=0}^{i-1}\multiset{a_1}{k}r_{m-i+1+k,m}(x')} \\
    &= \frac{r_{m-i,m}(x') + \sum_{k=1}^i \multiset{a_1}{k} r_{m-i+k,m}(x')}{\sum_{k=1}^i \multiset{a_1}{k-1} r_{m-i+k,m}(x')} \\
    &> \frac{\sum_{k=1}^i \multiset{a_1}{k} r_{m-i+k,m}(x')}{\sum_{k=1}^i \multiset{a_1}{k-1} r_{m-i+k,m}(x')} \\
    &= \frac{\sum_{k=1}^i \multiset{a_1}{k-1} \frac{a_1+k-1}{k} r_{m-i+k,m}(x')}{\sum_{k=1}^i \multiset{a_1}{k-1} r_{m-i+k,m}(x')} \\
    &\geq \frac{\sum_{k=1}^i \multiset{a_1}{k-1} \frac{a_1+i-1}{i} r_{m-i+k,m}(x')}{\sum_{k=1}^i \multiset{a_1}{k-1} r_{m-i+k,m}(x')} \\
    &= \frac{a_1+i-1}{i}
\end{align*}

Now, we treat the right inequality. Note that by the left inequality, $\frac{r_{m,m}(x)}{r_{m-1,m}(x)} > \frac{\lfloor{x}\rfloor+m-1}{m}\geq{1}$. 

Consider the vector $V = (1,1,\dots,1) \in \Bbb{R}^{m+1}$ consisting of all 1's. 
Thus, the hypotheses of the strict inequality in Lemma \ref{lemma1} are satisfied because $r_{m,m}(x) > r_{m-1,m}(x) \geq \cdots \geq r_{1,m}(x) \geq r_{0,m}(x)$. 
So, for any $x\notin{\mathbb{N}}$, we have 

\begin{align*}
    \frac{r_{i,m}(x)}{r_{i-1,m}(x)} 
    &= \frac{(\Lambda_m(a_1) \mathrm{CF}_m(x'))_{m+1-i}}{(\Lambda_m(a_1)\mathrm{CF}_m(x'))_{m+2-i}} \\
    &< \frac{(\Lambda_m(a_1)V)_{m+1-i}}{(\Lambda_m(a_1)V)_{m+2-i}} \tag{Lemma \ref{lemma1}}\\
    &= \frac{\sum_{j=0}^{i} \multiset{a_1}{j}}{\sum_{j=0}^{i-1} \multiset{a_1}{j}}\\
    &= \frac{\multiset{a_1+1}{i}}{\multiset{a_1+1}{i-1}}\\
    &= \frac{a_1+i}{i},
\end{align*}
where the penultimate equality holds by the multichoose Hockey Stick Identity, which states that $$\sum_{r=0}^{s} \multiset{a}{r} = \multiset{a+1}{s}.$$

\end{proof}

This statement leads to the following bounds.

\begin{corollary} \label{cor:bounds}
    Let $x \in \Bbb{R}$ (with $x \geq 1$) with continued fraction $x = [a_1,a_2,\dots]$. For all $1 \leq j<k \leq m$, we have 
    \[ \frac{\multiset{a_1}{k}}{\multiset{a_1}{j}} \leq \frac{r_{k,m}(x)}{r_{j,m}(x)} < \frac{\multiset{a_1+1}{k}}{\multiset{a_1+1}{j}},\] with equality holding only in the case when $x$ is an integer.
    In particular, taking $j=0$, we have $\multiset{a_1}{k} \leq r_{k,m}(x) < \multiset{a_1+1}{k}$.
\end{corollary}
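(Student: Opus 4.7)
The plan is to deduce the bounds by telescoping the ratio $r_{k,m}(x)/r_{j,m}(x)$ into a product of consecutive ratios $r_{i,m}(x)/r_{i-1,m}(x)$, each of which is controlled by Lemma \ref{lemma2}, and then to recognize the resulting bounds as ratios of multichoose coefficients.

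Concretely, I would start from the telescoping identity
\[ \frac{r_{k,m}(x)}{r_{j,m}(x)} \;=\; \prod_{i=j+1}^{k} \frac{r_{i,m}(x)}{r_{i-1,m}(x)}. \]
By Lemma \ref{lemma2}, since $\lfloor x \rfloor = a_1$, every factor satisfies $\tfrac{a_1-1+i}{i} \leq \tfrac{r_{i,m}(x)}{r_{i-1,m}(x)} < \tfrac{a_1+i}{i}$, with equality on the left precisely when $x$ is an integer. Taking the product over $i = j+1, \dots, k$ yields
\[ \prod_{i=j+1}^{k} \frac{a_1 - 1 + i}{i} \;\leq\; \frac{r_{k,m}(x)}{r_{j,m}(x)} \;<\; \prod_{i=j+1}^{k} \frac{a_1 + i}{i}, \]
with equality on the left holding exactly when $x \in \mathbb{N}$.

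Next I would identify the two products as the claimed multichoose ratios. Writing $\multiset{a}{\ell} = \binom{a+\ell-1}{\ell} = \tfrac{(a+\ell-1)!}{\ell!\,(a-1)!}$ gives
\[ \frac{\multiset{a_1}{k}}{\multiset{a_1}{j}} = \frac{(a_1+j)(a_1+j+1)\cdots(a_1+k-1)}{(j+1)(j+2)\cdots k} = \prod_{i=j+1}^{k} \frac{a_1-1+i}{i}, \]
and analogously $\prod_{i=j+1}^{k} \tfrac{a_1+i}{i} = \tfrac{\multiset{a_1+1}{k}}{\multiset{a_1+1}{j}}$. This is just routine factorial manipulation, and is really the only ``calculation'' in the argument.

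Finally, for the ``in particular'' statement with $j=0$, I would note that $r_{0,m}(x) = 1$ (the normalization in Definition \ref{def:CF}) and $\multiset{a_1}{0} = \multiset{a_1+1}{0} = 1$, so the telescoping argument applies verbatim starting from $i=1$ and yields $\multiset{a_1}{k} \leq r_{k,m}(x) < \multiset{a_1+1}{k}$. There is no genuine obstacle here: the work is entirely done by Lemma \ref{lemma2}, and the only thing to verify is the algebraic identity between the product of consecutive bounds and the ratio of multichoose coefficients.
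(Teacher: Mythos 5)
Your proposal is correct and follows essentially the same route as the paper: telescoping $r_{k,m}(x)/r_{j,m}(x)$ into consecutive ratios, bounding each factor by Lemma \ref{lemma2}, and recognizing the resulting products as the multichoose ratios. The only difference is that you spell out the factorial cancellations and the $j=0$ normalization explicitly, which the paper leaves as ``many cancellations.''
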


\begin{proof}
    By telescopically multiplying the inequalities in Lemma \ref{lemma2}, we see 
    \[ \prod_{i=j+1}^{k} \frac{a_1-1+i}{i} \leq \prod_{i=j+1}^{k} \frac{r_{i,m}(x)}{r_{i-1,m}(x)} < \prod_{i=j+1}^{k} \frac{a_1+i}{i}. \] 

    After many cancellations, the middle product is simply $\frac{r_{k,m}(x)}{r_{j,m}(x)}$. The outer products are the appropriate ratios of binomial coefficients.
\end{proof}

\begin{corollary} \label{cor2}
    Let $x = [a_1,a_2,\dots]$ and $y=[b_1,b_2,\dots]$ be real numbers, with $a_1 > b_1$ (that is, $\lfloor x \rfloor > \lfloor y \rfloor$). Then for all $j$, we have
    \[ \frac{r_{j,m}(x)}{r_{j-1,m}(x)} > \frac{r_{j,m}(y)}{r_{j-1,m}(y)}. \]

\end{corollary}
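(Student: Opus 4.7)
The plan is to deduce the corollary as a direct consequence of Lemma \ref{lemma2}, using the fact that the two-sided bounds it provides on the ratio $r_{j,m}(z)/r_{j-1,m}(z)$ are tight enough that moving $\lfloor z \rfloor$ up by one integer pushes the ratio past the entire allowed range for the previous value. Concretely, Lemma \ref{lemma2} shows that, writing $n = \lfloor z \rfloor$, the ratio lies in $[(n{-}1{+}j)/j,\ (n{+}j)/j)$, with the lower endpoint achieved iff $z = n$. So I want to read off a lower bound for the ratio at $x$ and an upper bound for the ratio at $y$, and exploit the gap created by $a_1 > b_1$.

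First I would note that for every $1 \leq j \leq m$, Lemma \ref{lemma2} yields
\[ \frac{r_{j,m}(x)}{r_{j-1,m}(x)} \geq \frac{a_1 - 1 + j}{j}, \]
since either $x = a_1$ is an integer and equality holds, or $x$ is not an integer and the displayed strict inequality in Lemma \ref{lemma2} gives the same bound (in fact strictly). Similarly,
\[ \frac{r_{j,m}(y)}{r_{j-1,m}(y)} < \frac{b_1 + j}{j}, \]
because this is the stated upper bound in the non-integer case, and in the case $y = b_1 \in \mathbb{N}$ the ratio equals $(b_1 - 1 + j)/j$, which is already strictly less than $(b_1 + j)/j$.

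Finally I would use the hypothesis that $a_1, b_1$ are integers with $a_1 > b_1$, hence $a_1 - 1 \geq b_1$, which immediately gives $(a_1 - 1 + j)/j \geq (b_1 + j)/j$. Chaining all three inequalities produces
\[ \frac{r_{j,m}(x)}{r_{j-1,m}(x)} \;\geq\; \frac{a_1-1+j}{j} \;\geq\; \frac{b_1+j}{j} \;>\; \frac{r_{j,m}(y)}{r_{j-1,m}(y)}, \]
as desired. There is no real obstacle here: the entire content of the corollary is packaged inside Lemma \ref{lemma2}, and what makes the argument work is simply that the sandwich interval provided by that lemma has width exactly $1/j$, which is precisely the gap opened up by a unit increase in the floor.
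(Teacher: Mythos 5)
Your proposal is correct and matches the paper's own proof essentially verbatim: both chain the lower bound $\frac{r_{j,m}(x)}{r_{j-1,m}(x)} \geq \frac{a_1-1+j}{j}$ and the upper bound $\frac{b_1+j}{j} > \frac{r_{j,m}(y)}{r_{j-1,m}(y)}$ from Lemma \ref{lemma2} with the integrality observation $a_1 - 1 \geq b_1$. Your explicit handling of the integer-versus-non-integer cases is a slightly more careful writeup of the same one-line argument.
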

\begin{proof}
    By Lemma \ref{lemma2}, we have 
    \[ \frac{r_{j,m}(x)}{r_{j-1,m}(x)} \geq \frac{a_1-1+j}{j} \geq \frac{b_1+j}{j} > \frac{r_{j,m}(y)}{r_{j-1,m}(y)} \]
\end{proof}

\begin {lemma} \label{lem4}
    The maps $\frac{r_{i,m}}{r_{i-1,m}}$ are strictly increasing on the interval $[1,\infty)$. That is, for $1 \leq x < y$, we have $\frac{r_{i,m}(x)}{r_{i-1,m}(x)} < \frac{r_{i,m}(y)}{r_{i-1,m}(y)}$.
\end {lemma}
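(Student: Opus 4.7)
The plan is to proceed by strong induction on a depth invariant $d(x,y)$ that measures how many leading partial quotients of the continued fractions of $x$ and $y$ coincide before the expansions first differ or terminate. Writing $T(z)=1/(z-\lfloor z\rfloor)$ for the Gauss-type shift (defined whenever $z$ is not an integer), let $d(x,y)$ be the smallest $n\geq 0$ for which the pair $(T^nx, T^ny)$ satisfies one of the two base cases below. Because $x\neq y$ have distinct (possibly finite) continued fraction expansions, $d(x,y)$ is finite, so strong induction on $d$ is well founded.

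The base cases occur at depth $0$. If $\lfloor x\rfloor < \lfloor y\rfloor$, then Corollary \ref{cor2} gives the inequality directly. Otherwise $\lfloor x\rfloor = \lfloor y\rfloor = a$ and at least one of $x,y$ is an integer; since $x<y$ and both lie in $[a,a+1)$, this forces $x=a$ and $y\in (a,a+1)$. Lemma \ref{lemma2} then gives $\frac{r_{i,m}(x)}{r_{i-1,m}(x)} = \frac{a-1+i}{i}$ exactly and the strict lower bound $\frac{r_{i,m}(y)}{r_{i-1,m}(y)} > \frac{a-1+i}{i}$, closing this case.

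For the inductive step, assume $\lfloor x\rfloor = \lfloor y\rfloor = a$ with neither $x$ nor $y$ an integer. Writing $x = a + 1/x'$ and $y = a + 1/y'$, the assumption $x<y$ forces $x' > y' > 1$; that is, the shift \emph{reverses} the order. Since $d(y',x') = d(x,y)-1$, the inductive hypothesis applied to the pair $(y',x')$ gives $\frac{r_{i,m}(x')}{r_{i-1,m}(x')} > \frac{r_{i,m}(y')}{r_{i-1,m}(y')}$ for every $1\leq i\leq m$. Setting $X = \mathrm{CF}_m(x')$ and $Y = \mathrm{CF}_m(y')$, these are exactly the coordinate-ratio inequalities $\frac{X_j}{X_{j+1}} > \frac{Y_j}{Y_{j+1}}$ for $1\leq j\leq m$, strict throughout, and in particular strict at $j=1$. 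The strict form of Lemma \ref{lemma1} applied with matrix $\Lambda_m(a)$ then reverses every one of these inequalities for the vectors $\Lambda_m(a)X$ and $\Lambda_m(a)Y$; since $\mathrm{CF}_m(x)$ and $\mathrm{CF}_m(y)$ are positive scalar multiples of these, reading off the resulting ratios produces $\frac{r_{i,m}(y)}{r_{i-1,m}(y)} > \frac{r_{i,m}(x)}{r_{i-1,m}(x)}$ for all $i$, as required.

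The conceptual obstacle, and the only thing one really has to keep straight, is that the continued fraction shift \emph{reverses} the order of $x$ and $y$, so a naive induction would carry an inequality in the wrong direction. The content of Lemma \ref{lemma1}, namely that $\Lambda_m(a)$-multiplication reverses the relevant chain of ratio inequalities, is exactly the tool that undoes this sign flip and propagates the bound up one level of the continued fraction. The only other point is that $d(x,y)$ is finite for distinct reals, which is immediate from the uniqueness of continued fraction expansions.
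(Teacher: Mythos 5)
Your proof is correct and follows essentially the same route as the paper's: you anchor the strict inequality at the first point of disagreement via Corollary \ref{cor2} (or Lemma \ref{lemma2} when one of the two values is an integer, which is exactly the paper's ``substring'' case) and then transport it back through the common prefix using the reversal property of Lemma \ref{lemma1}, your one-Gauss-shift-at-a-time induction being precisely what Corollary \ref{cor1} packages, with the classical parity comparison fact replaced by the observation that $z \mapsto 1/(z-\lfloor z\rfloor)$ reverses order. The only cosmetic slip is that the base case used to define the depth should read ``the floors differ'' rather than ``$\lfloor x\rfloor < \lfloor y\rfloor$,'' since after an odd number of shifts the pair $(T^nx,T^ny)$ is in reversed order; as you actually apply it (always relabelling so the smaller number comes first) this is harmless.
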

\begin {proof}
    Let $x$ and $y$ have continued fractions $x = [a_1,a_2,\dots]$ and $y = [b_1,b_2,\dots]$. Suppose $a_i=b_i$ for all $i<k$ and $a_k \neq b_k$.
    That is, suppose the continued fractions for $x$ and $y$ agree up to (but not including) the $k^\mathrm{th}$ position.
    The case $k=1$ was proved in Corollary \ref{cor2}.

    In general, it is known that for $k$ odd, $x < y$ if and only if $a_k < b_k$, and for $k$ even, $x < y$ if and only if $a_k > b_k$.
    Let's consider the case that $k$ is odd (the even case is similar). Let $x' = [a_k, a_{k+1}, \dots]$ and $y' = [b_k, b_{k+1}, \dots]$.
    By Corollary \ref{cor2}, since $a_k < b_k$, we have $\frac{r_{j,m}(x')}{r_{j-1,m}(x')} < \frac{r_{j,m}(y')}{r_{j-1,m}(y')}$ for all $j$. Since $\mathrm{CF}_m(x) = \Lambda_m(a_1)\Lambda_m(a_2) \cdots \Lambda_m(a_{k-1}) \mathrm{CF}_m(x')$ (up to a scalar multiple),
    and since we assume $k$ is odd, the result follows by Corollary \ref{cor1}.

    There is still the case where $a_1,a_2,\dots,a_k$ is a substring of $b_1,b_2,\dots$. That is, suppose that $x=[a_1,\dots,a_k]$, $y=[b_1,b_2,\dots]$, and that $a_i = b_i$ for $1 \leq i \leq k$. Again, let $y' = [b_k,b_{k+1},\dots]$. Then $x < y$ if $k$ is odd, and $x > y$ if $k$ is even. By Corollary \ref{cor:bounds} (or Lemma \ref{lemma2}), $\frac{r_{i,m}(y')}{r_{i-1,m}(y')} > \frac{r_{i,m}(b_k)}{r_{i-1,m}(b_k)}$. We obtain $x$ and $y$ from $[b_k]$ and $[b_k, b_{k+1},\dots]$ via multiplication by $\Lambda(a_1) \cdots \Lambda(a_{k-1})$. As in the above argument, the result follows by Corollary \ref{cor1}.
\end {proof}

\begin {theorem} \label {thm1} 
The maps $r_{i,m}(x)$ are strictly increasing on $[1,\infty)$. 
In addition, the maps $\frac{r_{m,m}(x)}{r_{i,m}(x)}$ are strictly increasing on $[1,\infty)$.
\end {theorem}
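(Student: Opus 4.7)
The plan is to reduce the theorem to Lemma \ref{lem4} via a telescoping identity. By the definition of $\mathrm{CF}_m(x)$, the last entry of the vector is $1$, i.e.\ $r_{0,m}(x) = 1$ for all $x$. Therefore we can write
\[
   r_{i,m}(x) \;=\; \prod_{k=1}^{i} \frac{r_{k,m}(x)}{r_{k-1,m}(x)},
   \qquad\text{and}\qquad
   \frac{r_{m,m}(x)}{r_{i,m}(x)} \;=\; \prod_{k=i+1}^{m} \frac{r_{k,m}(x)}{r_{k-1,m}(x)}.
\]
Each factor $\frac{r_{k,m}(x)}{r_{k-1,m}(x)}$ appearing on the right is strictly positive (all entries of $\mathrm{CF}_m(x)$ are positive by construction, as $\Lambda_m(a)$ has non-negative entries and the first column of a product of such matrices has strictly positive entries) and is strictly increasing on $[1,\infty)$ by Lemma \ref{lem4}.

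It then remains to observe the elementary fact that the product of finitely many strictly increasing positive functions is strictly increasing: if $1 \leq x < y$, then for positive strictly increasing $f$ and $g$ one has $f(x)g(x) < f(y)g(x) < f(y)g(y)$, and this extends by induction to any finite product. Applying this to the two displayed products gives both claims of the theorem simultaneously.

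There is no real obstacle — the whole content of the theorem has already been absorbed into Lemma \ref{lem4}, and the work here is purely to telescope the ratios and invoke positivity. The only small care needed is to note explicitly that the boundary term $r_{0,m}(x) = 1$ is a constant (so trivially included in the telescoping) and that the strict monotonicity of each ratio is preserved when we take products because each ratio is positive.
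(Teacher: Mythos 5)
Your proposal is correct and follows essentially the same route as the paper: the paper also telescopes $r_{i,m}(x)$ (and analogously $r_{m,m}(x)/r_{i,m}(x)$) into a product of the ratios $r_{k,m}(x)/r_{k-1,m}(x)$ and multiplies the strict inequalities from Lemma \ref{lem4}. Your added remarks on positivity and on why products of positive increasing functions are increasing simply make explicit what the paper leaves implicit.
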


\begin {proof}
Since $r_{0,m}(x) = 1$, then $r_{i,m}(x) = \frac{r_{i,m}(x)}{r_{i-1,m}(x)} \cdots \frac{r_{1,m}(x)}{r_{0,m}(x)}$. The result follows by multiplying the inequalities from Lemma \ref{lem4}.

The corresponding proof for $\frac{r_{m,m}}{r_{i,m}}$ follows by an analogous calculation.
\end {proof}

\section{Continuity}

In this section, we will prove that the maps $r_{i,m}(x)$ are continuous.

\begin{lemma}
\label{lemma5}
Let $z$ be a positive real number with continued fraction $z=[c_1,c_2,c_3\dots]$. Let $z_1,z_2,\dots$ be a sequence converging to $z$, whose terms have continued fractions $z_i = [d_{i,1},d_{i,2},d_{i,3}\dots]$. 

\begin {itemize}
    \item[(a)] Suppose $z$ is irrational. Then for all $k\geq{1}$, there is some $N$ such that $d_{i,j} = c_j$ for $i>N$ and $j \leq k$.
    \item[(b)] Suppose $z$ is rational, with finite continued fraction $z=[c_1,c_2,\dots,c_m]$, and $c_m > 1$. Then there is some $N$ such that when $i>N$,
               $d_{i,j} = c_j$ for $j \leq m-1$, and $d_{i,m}$ is either $c_m$ or $c_m-1$.
\end {itemize}
\end{lemma}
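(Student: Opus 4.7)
Both parts will follow by induction on a length parameter, with the continuity of the continued-fraction shift map providing the engine.

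For part (a), I would induct on $k$. In the base case $k=1$, irrationality of $z$ places $z$ strictly inside the open interval $(c_1,c_1+1)$, so $z_i \to z$ forces $z_i \in (c_1,c_1+1)$ for all large $i$, hence $d_{i,1} = \lfloor z_i \rfloor = c_1$. For the inductive step, once $d_{i,1}=c_1$ the ``shifted'' value $z_i' := 1/(z_i - c_1)$ is well-defined and exceeds $1$, and its continued fraction is precisely $[d_{i,2},d_{i,3},\dots]$. The map $x \mapsto 1/(x-c_1)$ is continuous on $(c_1,c_1+1)$, so $z_i' \to z' := 1/(z-c_1) = [c_2,c_3,\dots]$, which is again irrational. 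Applying the inductive hypothesis to the sequence $z_i' \to z'$ with parameter $k$ yields $d_{i,j+1} = c_{j+1}$ for all $j \leq k$, which combined with the base case gives the conclusion for $k+1$.

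For part (b), I would induct on $m$, the length of the continued fraction of $z$. The weaker conclusion appears in the base case $m=1$: then $z=c_1$ is an integer $\geq 2$, and for all large $i$ we have $z_i \in [c_1-1,c_1+1)$, so $d_{i,1} = \lfloor z_i \rfloor \in \{c_1-1, c_1\}$. For the inductive step ($m \geq 2$), the hypothesis $c_m > 1$ guarantees that $z$ is not an integer, because the tail $[c_2,\dots,c_m]$ strictly exceeds $1$ and so $z = c_1 + 1/[c_2,\dots,c_m]$ lies strictly inside $(c_1,c_1+1)$. As in (a), this forces $d_{i,1}=c_1$ for large $i$, and the shift reduces the problem to $z_i' \to z' = [c_2,\dots,c_m]$, a rational of continued-fraction length $m-1$ whose last partial quotient is still $c_m > 1$. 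The inductive hypothesis finishes the proof.

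The main bookkeeping point is checking that the shift preserves the property driving the induction at each step: in (a) that $[c_2,c_3,\dots]$ remains irrational, and in (b) that $[c_2,\dots,c_m]$ still has last partial quotient $> 1$. The Gauss-type shift map is continuous at irrationals and at non-integer rationals but discontinuous at integers, which is precisely why the induction can be pushed arbitrarily far in (a), and why it terminates in (b) with the $\{c_m,c_m-1\}$ ambiguity exactly at the final integer-valued step.
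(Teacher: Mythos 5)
Your proposal is correct and follows essentially the same route as the paper: induction on $k$ (resp.\ on the length $m$) with the Gauss shift $x \mapsto 1/(x-\lfloor x\rfloor)$ and its continuity away from integers doing the work, the integer base case producing the $\{c_m, c_m-1\}$ ambiguity. If anything, you make explicit a point the paper only gestures at, namely that $c_m>1$ keeps the shifted limit a non-integer with last partial quotient still $>1$, so the induction goes through.
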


\begin{proof}
We'll prove the assertion for irrational values first. 

We induct on $k$. The base case $k=1$ holds trivially. To see why, since $z$ is irrational (in particular not an integer) and the sequence of $z_i$ converges to $z$, we must have that $d_{i,1}=\lfloor{z_i}\rfloor$ converges to $c_1=\lfloor{z}\rfloor$. Therefore, we eventually have $d_{i,1}=c_1$. Now assume the statement holds for some natural number $k$. By the base case, we know that eventually $d_{i,1}=c_1$. Furthermore, since $z$ is irrational eventually $z_i>c_1$. Therefore the sequence given by $w_i=\frac{1}{z_i-c_1}$ is eventually well-defined and clearly converges to $\frac{1}{z-c_1}$. Thus by the inductive hypothesis, eventually the first $k$ terms of the continued fraction representation of $w_i$ agree with the first $k$ terms of the continued fraction representation of $\frac{1}{z-c_1}$. Thus, by construction of continued fractions, eventually the first $k+1$ terms of the continued fraction representation of $z_i$ agree with the first $k+1$ terms of the continued fraction representation of $z$.

Now we treat the rational case. We'll employ a very similar inductive argument. This time we'll induct on $m$. The base case $m=1$ (i.e., $z=c_1$) holds trivially because eventually $c_1-1<z_i<c_1+1$, meaning that $\lfloor{z_i}\rfloor=d_{i,1}\in\{c_1-1,c_1\}$. Now assume the statement holds for some natural number $m$. Consider some $z'$ with continued fraction representation $[c_1,c_2,c_3\dots,c_{m+1}]$ (and $c_{m+1} > 1$) and a sequence $z_i'$ converging to $z'$. Since $z$ is not an integer (as $m>1$), eventually the first term of the continued fraction representations of the $z_i'$ all are $c_1$. Now proceed in the same manner as in the argument for the irrational case. 
\end{proof}

\begin{lemma}
\label{lemma6}
For all $m>1$ and $1\leq i \leq m$, the functions $r_{i,m}$ are continuous at the natural numbers. 
\end{lemma}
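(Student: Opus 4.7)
The plan is to prove continuity at each $n \in \Bbb{N}$ by checking right- and left-limits separately. From Definition \ref{def:CF}, $\mathrm{CF}_m(n)$ is simply the first column of $\Lambda_m(n)$, so the target value in both cases is $r_{i,m}(n) = \multiset{n}{i}$.

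For right-continuity, I would take any sequence $z_k \searrow n$. Eventually $\lfloor z_k \rfloor = n$, and $z_k' := 1/(z_k - n)$ satisfies $z_k' \to \infty$. The recursion used in the proof of Lemma \ref{lemma2},
\[ \mathrm{CF}_m(z_k) = \frac{1}{r_{m,m}(z_k')}\,\Lambda_m(n)\,\mathrm{CF}_m(z_k'), \]
reduces the problem to showing that the rescaled vector $\mathrm{CF}_m(z_k')/r_{m,m}(z_k')$ converges to the standard basis vector $\mathbf{e}_1 = (1,0,\ldots,0)^\top$. This is precisely where Corollary \ref{cor:bounds} does the work: writing $z_k' = [a_1^{(k)}, a_2^{(k)}, \ldots]$ with $a_1^{(k)} \to \infty$, the corollary sandwiches $r_{j,m}(z_k')$ between $\multiset{a_1^{(k)}}{j}$ and $\multiset{a_1^{(k)}+1}{j}$, each a polynomial of degree exactly $j$ in $a_1^{(k)}$; hence for $j < m$ the ratios $r_{j,m}(z_k')/r_{m,m}(z_k')$ are $O\bigl((a_1^{(k)})^{j-m}\bigr) \to 0$. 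Taking limits yields $\mathrm{CF}_m(z_k) \to \Lambda_m(n)\mathbf{e}_1 = \mathrm{CF}_m(n)$.

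Left-continuity (relevant only for $n\geq 2$, since the domain is $[1,\infty)$) requires one extra step. For $z_k \nearrow n$, eventually $\lfloor z_k\rfloor = n-1$, and an elementary estimate shows $z_k' := 1/(z_k - n + 1) \to 1^+$, so the second continued-fraction digit of $z_k$ is $1$ eventually, and $z_k'' := 1/(z_k' - 1) \to \infty$. Applying the collapse argument above to $z_k''$ and then pushing the limit through two $\Lambda$-matrices shows $\mathrm{CF}_m(z_k) \to \Lambda_m(n-1)\,\Lambda_m(1)\,\mathbf{e}_1$. The first column of $\Lambda_m(1)$ is the all-ones vector $\mathbf{1}$, so this equals $\Lambda_m(n-1)\mathbf{1}$, whose $i$-th entry is
\[ \sum_{\ell=0}^{m+1-i} \multiset{n-1}{\ell} = \multiset{n}{m+1-i} \]
by the multichoose hockey stick identity recalled at the end of the proof of Lemma \ref{lemma2}. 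This vector is exactly the first column of $\Lambda_m(n)$, and its last coordinate is already $1$, so no further rescaling is needed: the left limit equals $\mathrm{CF}_m(n)$.

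The main technical obstacle is the ``collapse'' statement $\mathrm{CF}_m(y)/r_{m,m}(y) \to \mathbf{e}_1$ as $y \to \infty$; Corollary \ref{cor:bounds} is tailored for this, which is the reason for working with the projective vector rather than with the individual $r_{i,m}$. It is a pleasant sanity check that the two a priori different limiting matrix products, $\Lambda_m(n)\mathbf{e}_1$ from the right and $\Lambda_m(n-1)\Lambda_m(1)\mathbf{e}_1$ from the left, agree; this is exactly what the hockey stick identity encodes.
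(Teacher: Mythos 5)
Your proof is correct, but it takes a noticeably different route from the paper's. The paper first uses the already-established monotonicity (Theorem \ref{thm1}) to squeeze an arbitrary sequence $x_k \to a$ between the two explicit one-parameter families $[a-1,1,b_k] \leq x_k \leq [a,b_k]$ with $b_k \to \infty$, and then finishes by directly computing closed formulas for $r_{i,m}([a,b])$ and $r_{i,m}([a-1,1,b])$ and letting $b \to \infty$ (the hockey stick identity enters in the second computation). You instead dispense with monotonicity altogether: you analyze an arbitrary sequence through the stabilization of its first one or two continued-fraction digits, and reduce everything to a single projective ``collapse'' statement, namely that $\mathrm{CF}_m(y)/r_{m,m}(y) \to \mathbf{e}_1$ as the leading digit of $y$ tends to infinity, which you correctly extract from the two-sided bounds of Corollary \ref{cor:bounds} (degree $j$ versus degree $m$ polynomials in the leading digit). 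Pushing this limit through $\Lambda_m(n)$, respectively $\Lambda_m(n-1)\Lambda_m(1)$, and normalizing, gives the right and left limits; your identification of $\Lambda_m(n-1)\Lambda_m(1)\mathbf{e}_1$ with $\Lambda_m(n)\mathbf{e}_1$ via the hockey stick identity is the exact analogue of the paper's second explicit computation. What your version buys is independence from Theorem \ref{thm1} (only Lemma \ref{lemma2}/Corollary \ref{cor:bounds} are needed) and a reusable asymptotic lemma about the projective vectors; what the paper's version buys is brevity, since with monotonicity in hand one only ever has to evaluate two concrete rational sequences rather than argue about general sequences and continuity of the normalization map. Both arguments are complete, and your handling of the endpoint case $n=1$ (right-continuity only) and of the nonvanishing of the last coordinate before normalizing are the right points to check.
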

\begin{proof}

Let $a \in \Bbb{N}$ be a natural number. Consider the sequence of continued fractions given by $[a-1,1,b] = a-\frac{1}{b+1}$ and $[a,b]=a+\frac{1}{b}$. Note that both sequences converge to $a$ as $b \to \infty$, and that $[a-1,1,b] < a < [a,b]$. 

Now, let $x_i$ be a sequence converging to $a$. For large enough $k$, we will have $a - \frac{1}{2} < x_k < a + \frac{1}{2}$.
Remove finitely many terms from the beginning of the sequence so that this is true for all $k$.
Then there is a non-decreasing sequence $b_1 \leq b_2 \leq b_3 \leq \dots$ such that $\lim_{k \to \infty} b_k = \infty$ and 
$[a-1,1,b_k] \leq x_k \leq [a,b_k]$ for all $k$. Because the $r_{i,m}$ are monotone (by Theorem~\ref{thm1}), 
we have that $r_{im} \left( a - \frac{1}{b_k+1} \right) \leq r_{im}(x_k) \leq r_{im} \left( a + \frac{1}{b_k} \right)$. 
Therefore it suffices to show that 
\[ \lim_{b \to \infty} r_{i,m}([a,b]) = \lim_{b \to \infty} r_{i,m}([a-1,1,b]) = r_{i,m}(a). \] 

By a direct calculation, 
\[ r_{i,m}([a,b]) = \sum_{j=0}^{i} \multiset{a}{i-j} \frac{\multiset{b}{m-j}}{\multiset{b}{m}}.\]
Note that $\lim_{b \to \infty} \frac{\multiset{b}{m-j}}{\multiset{b}{m}} = 0$ for $j>0$. Therefore $\lim_{b \to \infty} r_{i,m}([a,b])$ is precisely $\multiset{a}{i}=r_{i,m}(a)$.

Now by a similar direct calculation, we see that 
 
\[ r_{i,m}([a-1,1,b]) = \sum_{j=0}^{i} \multiset{a-1}{i-j} \frac{\sum_{k=j}^m \multiset{b}{k}}{\sum_{k=0}^m \multiset{b}{k}}.\]

For each $j$, the ratio $\frac{\sum_{k=j}^m \multiset{b}{K}}{\sum_{k=0}^m \multiset{b}{k}}$ approaches $1$ as $b \to \infty$ (since both numerator and denominator are polynomials in $b$ with the same leading term), and we are left with just $\sum_{j=0}^{i} \multiset{a-1}{i-j} = \multiset{a}{i} = r_{i,m}(a)$.   
\end{proof}

\begin{theorem} \label{thm2}
For all $m>1$ and $1\leq{i}\leq{m}$, the functions $r_{i,m}$ are continuous on the interval $\left[{1,\infty}\right)$.
\end{theorem}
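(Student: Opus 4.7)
The plan is to combine the strict monotonicity from Theorem \ref{thm1} with a sandwich argument. To prove continuity at a point $x \in [1,\infty)$, it suffices to exhibit sequences $p_\ell \nearrow x$ and $q_\ell \searrow x$ with $r_{i,m}(p_\ell) \to r_{i,m}(x)$ and $r_{i,m}(q_\ell) \to r_{i,m}(x)$: monotonicity then traps the value $r_{i,m}(y)$ of any $y$ close enough to $x$ between the two sequences, so $r_{i,m}(y) \to r_{i,m}(x)$. Lemma \ref{lemma6} already handles $x \in \Bbb{N}$, so only two subcases remain: $x$ irrational, and $x$ rational but not an integer.

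For irrational $x = [a_1, a_2, \dots]$, I would take $p_\ell$ and $q_\ell$ to be the odd- and even-indexed rational convergents $x_n = [a_1, \dots, a_n]$, which classically approach $x$ from opposite sides. The MOSZ theorem recalled in the introduction ensures $r_{i,m}(x_n) \to r_{i,m}(x)$ (indeed, this is the definition at irrational points), so both subsequences do the job.

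For rational non-integer $x = [a_1, \dots, a_n]$ with $a_n \geq 2$, I would use the two natural one-step extensions $[a_1, \dots, a_n, b]$ and $[a_1, \dots, a_n - 1, 1, b]$ as $b \to \infty$; by the standard parity rules for continued fractions, one approaches $x$ from above and the other from below. Setting $M = \Lambda_m(a_1) \cdots \Lambda_m(a_n)$, the first column of $M\,\Lambda_m(b)$ is $M\,v(b)$ where $v(b) = \bigl(\multiset{b}{m}, \multiset{b}{m-1}, \dots, \multiset{b}{0}\bigr)^\top$; since $\multiset{b}{k} \sim b^k / k!$, the rescaled vector $v(b)/\multiset{b}{m}$ converges to the first standard basis vector, so the normalized first column of $M\,\Lambda_m(b)$ tends to the normalized first column of $M$, which is precisely $\mathrm{CF}_m(x)$. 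The analogous computation for $[a_1, \dots, a_n - 1, 1, b]$ runs in the same way, provided one knows the identity $\Lambda_m(a_n - 1)\,\Lambda_m(1)\,e_1 = \Lambda_m(a_n)\,e_1$, which follows directly from the multichoose Hockey Stick identity already used in Lemma \ref{lemma2}. I expect that last identity --- the assertion that the two continued fraction representations of a rational number produce the same higher continued fraction vector --- to be the main delicate point of the argument, but once it is in place the sandwich argument closes the proof.
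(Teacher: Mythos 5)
Your proposal is correct, and for irrational points it is essentially the paper's argument: both squeeze $r_{i,m}$ between its values at convergents lying on opposite sides of $x$, using Theorem \ref{thm1} together with the fact that $r_{i,m}(x)$ is by definition the limit along convergents (your monotone-sandwich phrasing even lets you bypass the prefix-stability Lemma \ref{lemma5} there). The genuine difference is at rational non-integer points. The paper computes continuity only at integers (Lemma \ref{lemma6}) and then handles $z=[c_1,\dots,c_n]$ by using Lemma \ref{lemma5} to strip the common prefix $c_1,\dots,c_{n-1}$ from nearby points, applying Lemma \ref{lemma6} at the integer $c_n$, and transferring back via the continuous operation of multiplying by the fixed matrix $\Lambda_m(c_1)\cdots\Lambda_m(c_{n-1})$ and renormalizing. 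You instead redo the Lemma \ref{lemma6}-type computation directly at an arbitrary rational: since the first column of $\Lambda_m(b)$ divided by $\multiset{b}{m}$ tends to the first standard basis vector, the normalized first columns of $M\Lambda_m(b)$ and of $\Lambda_m(a_1)\cdots\Lambda_m(a_{n-1})\Lambda_m(a_n-1)\Lambda_m(1)\Lambda_m(b)$ both tend to the normalized first column of $M$, the latter because $\Lambda_m(a_n-1)\Lambda_m(1)$ applied to the first basis vector equals the first column of $\Lambda_m(a_n)$ --- and this identity does follow from the Hockey Stick identity exactly as you claim, since $\Lambda_m(1)$ sends the first basis vector to the all-ones vector, whose image under $\Lambda_m(a_n-1)$ has entries $\sum_{k=0}^{m+1-i}\multiset{a_n-1}{k}=\multiset{a_n}{m+1-i}$. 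Your route buys a uniform treatment of all rationals, makes explicit that the two continued fraction representations of a rational yield the same $\mathrm{CF}_m$-vector, and dispenses with Lemma \ref{lemma5} entirely; the paper's route confines the limit computation to integers at the cost of the prefix-stability lemma and the transfer step. In a full write-up you should add one line justifying the sandwich principle (for a monotone function, one-sided limits along a single good sequence from each side pin down both one-sided limits) and note that your two approximating sequences lie in $[1,\infty)$ so Theorem \ref{thm1} applies; both points are immediate.
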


\begin{proof}
Let the continued fraction representation of $z$ be $[c_1,c_2,c_3\dots]$, and let $z_n = [c_1,\dots,c_n]$ be its convergents. Suppose a sequence $x_k$ converges to $z$. 
First suppose $z$ is irrational. By Lemma~\ref{lemma5}, for any $n \geq{1},$ eventually all the $x_k$ have continued fraction representation beginning with $[c_1,c_2,\dots,c_{2n}]$. Thus by Theorem \ref{thm1}, we eventually have $r_{i,m}(z_{2n-1}) \leq r_{i,m}(x_k) \leq r_{i,m}(z_{2n})$. Sending $n \to \infty$, we know that both $r_{i,m}(z_{2n-1})$ and $r_{i,m}(z_{2n})$ converge to $r_{i,m}(z)$. Thus by the squeeze theorem, $r_{i,m}(x_k)$ must converge to $r_{i,m}(z)$.

Now, suppose $z \in \Bbb{Q}$ with continued fraction representation $z = [c_1,c_2,c_3\dots,c_n]$. By Lemma~\ref{lemma5}, eventually the $n^\text{th}$ term of the continued fraction representation of $x_k$ equals $c_n - 1$ or $c_n$ and the first $n-1$ terms of the continued fraction representations of $x_k$ and $z$ agree. Let $h_k$ be the continued fraction representation of $x_k$ after removing the first $n-1$ terms. Note that because the sequence of $x_k$ converge to $z$, the sequence of $h_k$ converge to $c_n$. Therefore by Lemma~\ref{lemma6}, $r_{i,m}(h_k)$ converges to $r_{i,m}(c_n)$. Since $x_k$ differs from $h_k$ by appending $c_1,\dots,c_{n-1}$ to the beginning of the continued fraction, the vectors $\mathrm{CF}_m(x_k)$ and $\mathrm{CF}_m(h_k)$ differ (up to a scalar multiple) by left-multiplication of the matrix $\Lambda_m(c_1) \cdots \Lambda_m(c_{n-1})$. This operation (of multiplication by a constant matrix and division by a scalar) is continuous, and so we get that $r_{i,m}(x_k) \to r_{i,m}(z)$.
\end{proof}

\section{Extension of Higher Continued Fractions to $\mathbb{R}$}

We now extend the definition of higher continued fractions to all real numbers (rather than just $x \geq 1$). We begin by noting that any real number (even those less than 1) has a continued fraction whose first entry is $\lfloor x \rfloor$ (which may be 0 or negative), and the remaining terms of the continued fraction are that of the number $\frac{1}{x-\left\lfloor x \right\rfloor} > 1$.

We extend the definition of the matrix $\Lambda_m(a)$ to all integers (not necessarily positive) as follows. First note that the binomial coefficients $\binom{n}{k}$ naturally make sense for non-positive $n$: 
$$\binom{n}{k} = \begin{cases}
        0 & \text{if } k < 0 \\
        1 & \text{if } k = 0 \\
        \frac{\prod\limits_{i=0}^{k-1}{(n-i)}}{k!} & \text{if } k > 0
\end{cases}.$$
We continue to use the multichoose notation $\multiset{n}{k} = \binom{n+k-1}{k}$ even in this more general context. We then define the matrix $\Lambda_m(a)$ in the same manner as before, so that the $i,j$-entry is given by $\multiset{a}{m+2-i-j}$.

\begin{ex}
    Here are some examples of $\Lambda$-matrices for non-positive values.
    \[
    \Lambda_2(0) = \begin{pmatrix} 0 & 0 & 1 \\ 0 & 1 & 0 \\ 1 & 0 & 0 \end{pmatrix}, \quad
    \Lambda_2(-4) = \begin{pmatrix} 6 & -4 & 1 \\ -4 & 1 & 0 \\ 1 & 0 & 0 \end{pmatrix}, \quad
    \Lambda_3(-3) = \begin{pmatrix} -1 & 3 & -3 & 1 \\ 3 & -3 & 1 & 0 \\ -3 & 1 & 0 & 0 \\ 1 & 0 & 0 & 0 \end{pmatrix}
    \]
\end{ex}

We can therefore extend the definition of $r_{i,m}(x)$ for any real number $x$ in the natural way: if $x = [c_1,\dots,c_n]$, where $c_1$ is potentially non-positive, then for $X := \Lambda_m(c_1) \cdots \Lambda_m(c_n)$, we simply define
\[ \mathrm{CF}_m(x) = (r_{m,m}(x), r_{m-1,m}(x), \dots, r_{1,m}(x), 1)^\top \]
to be the first column of $\frac{X}{X_{m+1,1}}$. It is easy to see that this extends to irrational values of $x$, since convergence follows from Theorem 6.11 in \cite{mosz}.

\begin{ex}
    For a positive integer $n$, we have
    \[ r_{i,m}(-n) = \multiset{-n}{i} = (-1)^i \binom{n}{i} \]
\end{ex}

\begin{ex}
    The continued fraction for $x = -\frac{4}{7}$ is $[-1,2,3]$. The corresponding matrix product (for $m=2$) is
    \[
    \Lambda_2(-1)\Lambda_2(2)\Lambda_2(3) = \begin{pmatrix} -9 & -4 & -1 \\ -10 & -4 & -1 \\ 25 & 11 & 3 \end{pmatrix}
    \]
    We therefore have $r_{2,2}\left( - \frac{4}{7} \right) = -\frac{9}{25}$ and $r_{1,2} \left( -\frac{4}{7}\right) = -\frac{2}{5}$.
\end{ex}

We will now see a second way to think about extending $r_{i,m}$ to values $x < 1$, and we will see it is equivalent to the definition above. Since $\mathrm{CF}_m(x+1) = R_m \, \mathrm{CF}_m(x)$ (up to a scalar multiple), it follows that the $r_{i,m}$-values are related by
\[ r_{i,m}(x+1) = \sum_{k=0}^i r_{k,m}(x) \]
Inverting this relationship, we quickly see that for $i>0$,
\[ r_{i,m}(x-1) = r_{i,m}(x) - r_{i-1,m}(x).\]
We may therefore use this observation to extend $r_{i,m}(x)$ to values $x < 1$. Specifically, if $x+n \geq 1$ for some integer $n$, then $r_{i,m}(x+n)$ may be defined as usual, and we may use the equation above $n$ times to define $r_{i,m}(x)$.

\begin{prop} \label{lemma7}
    The two definitions given above for $r_{i,m}(x)$ when $x < 1$ are equivalent.
\end{prop}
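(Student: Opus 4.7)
The plan is to show that Definition 1 (the matrix-product extension via generalized $\Lambda$-matrices) itself satisfies the shift relation $\mathrm{CF}_m(x+1) = R_m \, \mathrm{CF}_m(x)$ for every real $x$, and hence obeys the recurrence $r_{i,m}(x+1) = \sum_{k=0}^{i} r_{k,m}(x)$ for every real $x$. Since Definition 2 is determined uniquely by this recurrence together with the values of $r_{i,m}$ on $[1,\infty)$, where Definition 1 coincides with the original definition, the two must agree everywhere.

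The key ingredient is the matrix identity $R_m \, \Lambda_m(a) = \Lambda_m(a+1)$ for every integer $a$ (positive, zero, or negative). First I would verify this entry-wise from the formula $(\Lambda_m(a))_{i,j} = \multiset{a}{m+2-i-j}$, which reduces the claim to the multichoose Hockey Stick Identity $\sum_{p=0}^{s} \multiset{a}{p} = \multiset{a+1}{s}$. This identity itself follows from the Pascal recurrence $\binom{n+1}{k} = \binom{n}{k} + \binom{n}{k-1}$, which remains valid for arbitrary integer $n$ under the convention $\binom{n}{k} = 0$ for $k < 0$ adopted in the paper.

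Given this matrix identity, suppose $x$ has continued fraction $[c_1, c_2, \ldots, c_n]$ with $c_1 = \lfloor x \rfloor$. Since $x+1$ and $x$ have the same fractional part $x - \lfloor x \rfloor$, the continued fraction of $x+1$ is $[c_1+1, c_2, \ldots, c_n]$, and the corresponding matrix products satisfy $X(x+1) = \Lambda_m(c_1+1)\Lambda_m(c_2)\cdots\Lambda_m(c_n) = R_m X(x)$. Because the bottom row of $R_m$ is $(0,\ldots,0,1)$, the $(m+1,1)$-entry is preserved, so the normalizing scalar in Definition 1 is the same for $x$ and $x+1$, giving genuine equality $\mathrm{CF}_m(x+1) = R_m \, \mathrm{CF}_m(x)$. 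Reading off entries produces the additive recurrence, and inverting yields the shift relation $r_{i,m}(x-1) = r_{i,m}(x) - r_{i-1,m}(x)$ used to build Definition 2.

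The only non-formal step is the matrix identity $R_m \, \Lambda_m(a) = \Lambda_m(a+1)$ for arbitrary integer $a$, which is where the extension of binomial and multichoose coefficients to non-positive arguments must be handled carefully (in particular, confirming that the Hockey Stick Identity still holds with the paper's sign conventions). Once that is verified, the rest is a uniqueness argument: both definitions agree on $[1,\infty)$ and both satisfy the same downward recurrence, so they must agree on all of $\Bbb{R}$.
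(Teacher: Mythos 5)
Your proposal is correct and rests on essentially the same idea as the paper's proof: the core fact in both is that multiplying by $R_m$ (equivalently $R_m^{-1}$) shifts the parameter of $\Lambda_m$ by one for arbitrary integer argument, verified entrywise from $\multiset{a}{k}$ via Pascal/Hockey Stick, after which the two definitions are matched. The only differences are cosmetic: you check $R_m\Lambda_m(a)=\Lambda_m(a+1)$ by the Hockey Stick sum and conclude via a uniqueness-of-recurrence argument, whereas the paper checks the equivalent single step $R_m^{-1}\Lambda_m(-c)=\Lambda_m(-(c+1))$ by Pascal's identity and directly unwinds the second definition into the matrix product $(R_m^{-1})^{c_1+1}\Lambda_m(1)\,\mathrm{CF}_m(y)$.
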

\begin{proof}
    Let $c_1 \geq 0$, and suppose $x = [-c_1,c_2,c_3,\dots]$ is a non-positive real number. Let $y = [c_2,c_3,\dots]$. The first definition given above for $r_{i,m}(x)$ expresses $\mathrm{CF}_m(x)$ in terms of $\Lambda_m(-c_1) \mathrm{CF}_m(y)$, while the second definition is terms of $(R_m^{-1})^{c_1+1} \Lambda_m(1) \mathrm{CF}_m(y)$. 
    So it suffices to show for all integers $c\geq{0}$ that $\Lambda_m(-c) = (R_m^{-1})^{c+1}\Lambda_m(1)$, or equivalently $R_m^{c+1}\Lambda_m(-c) = \Lambda_m(1)$. 
    Note that $R_m^{-1}$ is given by  
    $$R_m^{-1} = \begin{pmatrix}
    1 & -1 & 0 & \dots & 0 & 0 \\
    0 & 1 & -1 & \dots & 0 & 0 \\
    0 & 0 & 1 & \dots & 0 & 0 \\
    \vdots & \vdots & \vdots & \ddots & \vdots & \vdots \\ 
    0 & 0 & 0 & \dots & 1 & -1 \\
    0 & 0 & 0 & \dots & 0 & 1 \\
    \end{pmatrix}.$$
    
    We proceed with induction on $c$. 
    The base case is $c = 0$. Recall that $\Lambda_m(0)$ is the anti-diagonal matrix $W_m$ (with $1$'s on the anti-diagonal). It is a simple calculation to check that $R_m \Lambda_m(0) = \Lambda_m(1)$.
    
    Now assume $R_m^{c+1}\Lambda_m(-c) = \Lambda_m(1)$ for some $c\geq{0}$. We wish to show that $R_m^{c+2}\Lambda_m(-(c+1)) = \Lambda_m(1)$. By induction, this is equivalent to showing that $\Lambda_m(-(c+1)) = R_m^{-1} \Lambda_m(-c)$.

    Indeed, note that the $ij$-th entry of $R^{-1} \Lambda_m(-c)$ equals 

    \[
        \Lambda_m(-c)_{i,j}-\Lambda_m(-c)_{i+1,j}
        = \multiset{-c}{m+2-(i+j)}-\multiset{-c}{m+2-((i+1)+j)},
    \]
    which is equal to $\multiset{-(c+1)}{m+2-(i+j)} = \Lambda_m(-(c+1))_{ij}$ by Pascal's Identity. 
\end{proof}

This lemma allows us to see that $r_{i,m}(x)$ is continuous for all $x$.

\begin{corollary}
\label{cor4}
For fixed $m > 1$ and $1 \leq i \leq m$, the functions $r_{i,m}$ are continuous for all $x \in \mathbb{R}$.
\end{corollary}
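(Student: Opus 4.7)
The plan is to bootstrap Theorem \ref{thm2} (continuity on $[1,\infty)$) to all of $\mathbb{R}$ using the recursion
\[
r_{i,m}(x-1) \;=\; r_{i,m}(x) - r_{i-1,m}(x) \qquad (i \geq 1),
\]
together with the trivial identity $r_{0,m}(x) = 1$ for all $x$, both of which are established in the discussion preceding Proposition~\ref{lemma7}. Proposition~\ref{lemma7} is the crucial ingredient that lets us do this: it guarantees that the matrix-product definition of $r_{i,m}(x)$ for $x<1$ agrees with the definition obtained by iterating the above shift, so continuity properties proven using one definition transfer to the other.

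More concretely, I would argue by induction on a positive integer $n$ that $r_{i,m}$ is continuous on $[1-n,\infty)$ for every $i$ with $0 \leq i \leq m$. The base case $n=0$ is Theorem~\ref{thm2} together with the fact that $r_{0,m} \equiv 1$. For the inductive step, assume each $r_{j,m}$ is continuous on $[1-n,\infty)$. For any $x \in [-n,1-n)$, we have $x+1 \in [1-n, 2-n) \subseteq [1-n, \infty)$, and by Proposition~\ref{lemma7},
\[
r_{i,m}(x) \;=\; r_{i,m}(x+1) - r_{i-1,m}(x+1).
\]
The right-hand side is a continuous function of $x$ on $[-n, 1-n)$ by the inductive hypothesis, so $r_{i,m}$ is continuous on the half-open interval $[-n, 1-n)$. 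Since it was already continuous on $[1-n,\infty)$, we only need to check continuity at the junction point $x = 1-n$; but continuity from the right holds by the inductive hypothesis, and the one-sided limit from the left equals $r_{i,m}(2-n) - r_{i-1,m}(2-n)$, which is exactly the value $r_{i,m}(1-n)$ given by the shift identity. Hence $r_{i,m}$ is continuous on $[-n,\infty)$, completing the induction.

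Since $n$ is arbitrary, $r_{i,m}$ is continuous on $\mathbb{R}$. The main obstacle is really just bookkeeping: one has to be confident that the ``second definition'' (iterated shifts) coincides with the matrix-product definition used in the statement of Corollary~\ref{cor4}, and this is precisely the content of Proposition~\ref{lemma7}. Once that equivalence is invoked, continuity propagates leftward one unit at a time via a single continuous operation (subtraction of two already-continuous functions), and no further analytic work is required.
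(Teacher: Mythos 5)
Your proposal is correct and is essentially the paper's argument: the paper's proof of Corollary~\ref{cor4} simply cites Theorem~\ref{thm2} and Proposition~\ref{lemma7}, and your induction via the shift identity $r_{i,m}(x-1)=r_{i,m}(x)-r_{i-1,m}(x)$ is exactly the intended way those two results combine. The junction-point check and the appeal to Proposition~\ref{lemma7} to identify the shifted definition with the matrix-product definition are the right details to supply.
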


\begin{proof}
    This follows from Theorem~\ref{thm2} and Proposition~\ref{lemma7}.
\end{proof}

Unfortunately, monotonicity of $r_{i,m}(x)$ does not carry over to all $x\in\mathbb{R}$, but it does hold for all $x > 0$. To show this, we begin with the following lemma.

\begin{prop}
    \label{lemma8}
    For $x \geq 1$, we have 
    $$ r_{i,m}\left(\frac{1}{x}\right) = \frac{r_{m-i,m}(x)}{r_{m,m}(x)}.$$
    In particular, when $i=m$, we have $r_{m,m} \left( \frac{1}{x} \right) = \frac{1}{r_{m,m}(x)}$.
\end{prop}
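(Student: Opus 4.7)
The plan is to exploit two observations: for $x > 1$, the continued fraction of $1/x$ is obtained by prepending a $0$ to that of $x$, and $\Lambda_m(0) = W_m$ is the anti-diagonal permutation whose left action reverses rows. Concretely, if $x = [a_1,\dots,a_n]$ is rational with $x > 1$, then $\lfloor 1/x \rfloor = 0$ and $1/(1/x - 0) = x$, so $1/x = [0, a_1, \dots, a_n]$. Setting $M = \Lambda_m(a_1)\cdots\Lambda_m(a_n)$, the extended definition in Section~4 then expresses $\mathrm{CF}_m(1/x)$ as the first column of $W_m M$, normalized so the bottom entry is $1$.

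The main calculation is then transparent. The first column of $M$ equals $M_{m+1,1} \cdot (r_{m,m}(x), r_{m-1,m}(x), \dots, r_{1,m}(x), 1)^\top$, so the first column of $W_m M$ is its reversal $M_{m+1,1} \cdot (1, r_{1,m}(x), r_{2,m}(x), \dots, r_{m,m}(x))^\top$. Dividing by the new bottom-left entry $M_{m+1,1} \cdot r_{m,m}(x)$ and then reading off position $m+1-i$ (which, by the definition of $\mathrm{CF}_m$, equals $r_{i,m}(1/x)$) yields $r_{i,m}(1/x) = r_{m-i,m}(x)/r_{m,m}(x)$, with the $i = m$ specialization emerging from the convention $r_{0,m}(x) = 1$.

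It remains to dispatch the boundary and irrational cases. The case $x = 1$ is trivial since $r_{i,m}(1) = \multiset{1}{i} = 1$ on both sides. For irrational $x > 1$, both sides are continuous in $x$ by Corollary~\ref{cor4}, and $r_{m,m}(x) \geq r_{m,m}(1) = 1$ by Theorem~\ref{thm1}, so the denominator is bounded away from zero; the identity therefore propagates from the dense set of rationals by taking limits. I do not anticipate a serious obstacle: the core content is the single algebraic identity $\Lambda_m(0) = W_m$ combined with careful index tracking, the delicate part being that $\mathrm{CF}_m$ lists the $r_{i,m}$'s in decreasing order of $i$ from top to bottom, so the row-reversal by $W_m$ naturally pairs $r_{i,m}(1/x)$ with $r_{m-i,m}(x)$.
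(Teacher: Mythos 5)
Your proposal is correct and follows essentially the same route as the paper: prepend $0$ to the continued fraction of $x$, observe $\Lambda_m(0)=W_m$ reverses the $\mathrm{CF}_m$ vector, and renormalize. The only difference is that the paper applies this computation directly to the (possibly infinite) continued fraction of $x$, whereas you verify it for rationals and then pass to irrationals by continuity (Corollary~\ref{cor4}), which is a legitimate and non-circular way to finish.
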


\begin{proof}
    Suppose that $x$ has the continued fraction expansion $[c_1, c_2, \dots]$. Then we know that the continued fraction representation of $\frac{1}{x}$ is $[0, c_1, c_2, \dots]$. Since $\Lambda_m(0)$ is the $(m+1)\times (m+1)$ anti-diagonal matrix, we have 
    $$ r_{i,m}\left(\frac{1}{x}\right) = \frac{(\Lambda_m(0)\mathrm{CF}_m(x))_{m+1-i}}{(\Lambda_m(0)\mathrm{CF}_m(x))_{m+1}} = \frac{r_{m-i,m}(x)}{r_{m,m}(x)}, $$
    as desired. 
\end{proof}

\begin{ex}
    For a positive integer $n$, we know that $r_{i,m}(n) = \multiset{n}{i}$, and so we see that $r_{i,m} \left( \frac{1}{n} \right) = \frac{\multiset{n}{m-i}}{\multiset{n}{m}}$.

    For example, when $m=2$, we have $r_{2,2} \left( \frac{1}{n} \right) = \frac{2}{n(n+1)}$ and $r_{1,2} \left( \frac{1}{n} \right) = \frac{2}{n+1}$.

    For $m=3$, we have $r_{3,3} \left( \frac{1}{n} \right) = \frac{6}{n(n+1)(n+2)}$, and $r_{2,3} \left( \frac{1}{n} \right) = \frac{6}{(n+1)(n+2)}$, and $r_{1,3} \left( \frac{1}{n} \right) = \frac{3}{n+2}$.
\end{ex}

\begin{theorem}
    \label{thm3}
    The function $r_{i,m}(x)$ is strictly increasing for all $x \geq 0$ and $0 < i \leq m$.
\end{theorem}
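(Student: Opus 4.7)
The plan is to leverage Proposition~\ref{lemma8} to convert the sought monotonicity on $(0,1]$ into the already-established monotonicity statements on $[1,\infty)$ from Theorem~\ref{thm1}, then glue the two pieces at $x=1$ and handle $x=0$ separately.

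On $[1,\infty)$ the conclusion is immediate from the first assertion of Theorem~\ref{thm1}. For $x \in (0,1]$, I would set $y = 1/x \in [1,\infty)$ and invoke Proposition~\ref{lemma8} to write
\[ r_{i,m}(x) \;=\; \frac{r_{m-i,m}(y)}{r_{m,m}(y)}. \]
Since $y$ is a strictly decreasing function of $x$ on this interval, it suffices to show that this ratio is strictly \emph{decreasing} in $y$ on $[1,\infty)$, i.e.\ that its reciprocal $r_{m,m}(y)/r_{m-i,m}(y)$ is strictly increasing. When $1 \leq i \leq m-1$, the index $m-i$ lies in $\{1,\dots,m-1\}$ and this is exactly the second assertion of Theorem~\ref{thm1}. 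When $i = m$, we have $r_{0,m}(y) \equiv 1$, so the ratio becomes just $r_{m,m}(y)$, which is strictly increasing by the first assertion of Theorem~\ref{thm1}.

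To glue, observe that both formulas give $r_{i,m}(1) = \multiset{1}{i} = 1$, so strict monotonicity on each of the closed intervals $(0,1]$ and $[1,\infty)$---which share the common endpoint $1$---combines directly to strict monotonicity on $(0,\infty)$. For $x = 0$, the extended definition gives $r_{i,m}(0) = \multiset{0}{i} = 0$ for $i > 0$, while the inversion formula above exhibits $r_{i,m}(x) > 0$ for every $x \in (0,1]$ (both numerator and denominator lie in $\mathbb{R}_{\geq 1}$), and $r_{i,m}(x) \geq 1$ for $x \geq 1$. Hence $r_{i,m}(0) = 0 < r_{i,m}(x)$ for every $x > 0$, extending the conclusion to $[0,\infty)$.

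I do not anticipate a serious obstacle: the heavy lifting has been done in Theorem~\ref{thm1} and Proposition~\ref{lemma8}. The main point of care is that the ``bonus'' second clause of Theorem~\ref{thm1}---about $r_{m,m}/r_{i,m}$ being increasing---is precisely what the inversion $x \mapsto 1/x$ is designed to exploit; without it the argument on $(0,1]$ would not close. A small bookkeeping check is that increasing $i$ on $(0,1]$ corresponds to \emph{decreasing} $m-i$ on $[1,\infty)$, so the directions of monotonicity line up correctly after the reciprocal.
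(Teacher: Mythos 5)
Your proposal is correct and follows essentially the same route as the paper: apply Proposition~\ref{lemma8} to transfer the question on $(0,1]$ to the strict monotonicity of $r_{m,m}/r_{m-i,m}$ on $[1,\infty)$ from Theorem~\ref{thm1}, then handle $x=0$ via $r_{i,m}(0)=0$ and positivity elsewhere. Your explicit treatment of the $i=m$ case (where the ratio degenerates to $r_{m,m}$) and of the gluing at $x=1$ is slightly more careful than the paper's write-up, but it is the same argument.
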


\begin{proof}
    We know by Theorem~\ref{thm1} that $\frac{r_{m,m}}{r_{i,m}} (x)$ is a strictly increasing function of $x$ on $[1,\infty)$ for $i<m$. Therefore, $\frac{r_{m,m}}{r_{m-i,m}} (x)$ is a strictly increasing function of $x$ on $[1,\infty)$ for $i>0$. Hence, $\frac{r_{m-i,m}}{r_{m,m}} (x)$ is a strictly decreasing function of $x$ on $[1,\infty)$ for $i>0$. Thus, $\frac{r_{m-i,m}}{r_{m,m}} (\frac{1}{x})$ is a strictly increasing function of $x$ on $(0,1)$ for $i>0$. Since $\frac{r_{m-i,m}}{r_{m,m}} (\frac{1}{x})={r_{i,m}}(x)$ for $x$ on $(0,1]$, we have that $r_{i,m}(x)$ is increasing on $(0,1]$. Finally, to extend to $[0,1]$, note that $r_{i,m}(0) = 0$ for any $i,m$, and that $r_{i,m}(x) > 0$ otherwise.
    
\end{proof}

\section{Asymptotics}

In this section we will consider the asymptotic behavior of $r_{im}(x)$ as $m \to \infty$, and its implications for certain generating functions.

\begin{theorem}
\label{thm4}
For fixed $x>0$ and $i\geq{0}$, we have $$\lim_{m\to\infty} r_{i,m}(x)=\multiset{\lceil{x}\rceil}{i}.$$
\end{theorem}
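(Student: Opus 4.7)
The plan is to peel off the first partial quotient of $x$ and apply the squeeze theorem, using Corollary~\ref{cor:bounds} on the resulting tail.

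First I would dispose of the trivial cases: $r_{0,m}(x) = 1 = \multiset{\lceil x \rceil}{0}$ for every $x$, and if $x$ is a positive integer then $r_{i,m}(x) = \multiset{x}{i}$ for every $m$ (as noted in the proof of Lemma~\ref{lemma2}), so the limit equals $\multiset{\lceil x \rceil}{i}$ exactly. From here on assume $x > 0$ is non-integer and $i \geq 1$. Write $x = [a, a_2, a_3, \ldots]$ with $a = \lfloor x \rfloor \geq 0$, so $\lceil x \rceil = a + 1$, and set $x' = [a_2, a_3, \ldots] \geq 1$ with $b := \lfloor x' \rfloor \geq 1$. Unfolding the relation $\mathrm{CF}_m(x) = \frac{1}{r_{m,m}(x')}\Lambda_m(a)\,\mathrm{CF}_m(x')$ coordinate-by-coordinate, exactly as in the proof of Lemma~\ref{lemma2}, yields
\[ r_{i,m}(x) \;=\; \sum_{j=0}^{i} \multiset{a}{j}\,\frac{r_{m-i+j,m}(x')}{r_{m,m}(x')}. \]
The $j = i$ term contributes $\multiset{a}{i}$ directly, so if I can show that each of the remaining ratios $r_{m-s,m}(x')/r_{m,m}(x')$ with $1 \leq s \leq i$ tends to $1$, then the whole sum tends to $\sum_{j=0}^{i}\multiset{a}{j} = \multiset{a+1}{i} = \multiset{\lceil x \rceil}{i}$ by the multichoose hockey stick identity invoked in Lemma~\ref{lemma2}.

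For these ratio limits, I would apply Corollary~\ref{cor:bounds} to $x'$ with indices $m - s$ and $m$ (valid once $m \geq s + 1$) and reciprocate, obtaining
\[ \frac{\multiset{b+1}{m-s}}{\multiset{b+1}{m}} \;<\; \frac{r_{m-s,m}(x')}{r_{m,m}(x')} \;\leq\; \frac{\multiset{b}{m-s}}{\multiset{b}{m}}. \]
Each outer bound rewrites as $\prod_{r=0}^{s-1}(m-r)/(m+b'-1-r)$ for $b' \in \{b, b+1\}$, a ratio of two monic degree-$s$ polynomials in $m$, and therefore tends to $1$ as $m \to \infty$ with $b$ and $s$ held fixed. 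The squeeze theorem then delivers the ratio limit, completing the proof.

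I do not foresee a genuine obstacle here: Corollary~\ref{cor:bounds} does essentially all the heavy lifting, and the rest is elementary combinatorics. The one point deserving care is that the recursion formula for $r_{i,m}(x)$ must be valid uniformly in $a \geq 0$; when $a = 0$ (i.e.\ $x \in (0,1)$) the identity $\multiset{0}{j} = 0$ for $j \geq 1$ collapses the sum to the single term $r_{m-i,m}(x')/r_{m,m}(x')$, which is precisely the relation already recorded in Proposition~\ref{lemma8}, so the squeeze argument still applies without modification.
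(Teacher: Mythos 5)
Your proposal is correct, and it takes a genuinely different route from the paper's proof. The paper handles a non-integer $x=[c_1,c_2,\dots]$ by sandwiching it between the two-term continued fractions $[c_1,c_2+1]$ and $[c_1,c_2]$, invoking monotonicity of $r_{i,m}$ in the variable $x$ (Theorems \ref{thm1} and \ref{thm3}), and then computing $\lim_{m\to\infty} r_{i,m}\left(c_1+\frac{1}{a}\right)$ exactly from the explicit formula $r_{i,m}\left(c_1+\tfrac1a\right)=\frac{\sum_{j}\multiset{c_1}{i+1-j}\multiset{a}{m+1-j}}{\multiset{a}{m}}$, finishing with the hockey stick identity. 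You instead keep the actual $x$, peel off its first partial quotient via the relation $r_{i,m}(x)=\sum_{j=0}^{i}\multiset{a}{j}\frac{r_{m-i+j,m}(x')}{r_{m,m}(x')}$ (the same unfolding used in the proof of Lemma \ref{lemma2}, and Proposition \ref{lemma8} when $a=0$), and reduce everything to showing $r_{m-s,m}(x')/r_{m,m}(x')\to 1$ for fixed $s$, which you squeeze between the explicit multichoose ratios supplied by Corollary \ref{cor:bounds} applied to the tail $x'\geq 1$; both bounds are ratios of monic degree-$s$ polynomials in $m$ and tend to $1$. The computations check out (including the index constraint $m\geq s+1$ and the collapse of the sum when $\lfloor x\rfloor=0$), and the finite sum passes to the limit term by term, ending with the same hockey stick identity. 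What your approach buys: it avoids any appeal to monotonicity in $x$ (in particular Theorem \ref{thm3} on $(0,1)$), replacing it by the coordinate-ratio bounds of Corollary \ref{cor:bounds} on the tail; what the paper's approach buys is that it only ever needs $r_{i,m}$ evaluated at rationals with two-term continued fractions, where everything is a closed-form multichoose expression, at the cost of quoting the monotonicity theorems. One small point to make explicit if you write this up: the unfolding relation for irrational $x$ should be justified by passing to the limit along convergents (as the paper implicitly does in Lemma \ref{lemma2}), since $r_{i,m}$ at irrationals is defined as such a limit.
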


\begin{proof}
If $i = 0$, we know $r_{i, m}(x)=1$ for all $x$ and $m$, so the statement holds.

Next, suppose $x\in{\mathbb{N}}$. Thus, by definition of higher continued fractions, we know $r_{i,m}(x)=\multiset{{x}}{i}=\multiset{\lceil{x}\rceil}{i}$ is constant for all $m$, so $\lim\limits_{m\to\infty} r_{i,m}(x)=\multiset{\lceil{x}\rceil}{i}$.

Now, consider the case where $x\not\in{\mathbb{N}}$. Suppose the continued fraction representation of $x$ is $[c_1,c_2,\dots]$. We wish to show that $\lim\limits_{m\to\infty} r_{i,m}(x)=\multiset{{c_1+1}}{i}$.

Recall that for fixed $i$ and $m$, we know $r_{i,m}(x)$ is a strictly increasing function of $x$. Additionally, we know that $c_1+\frac{1}{c_2+1}\leq{x}\leq{c_1+\frac{1}{c_2}}$. Thus to show $\lim\limits_{m\to\infty} r_{i,m}(x)=\multiset{c_1+1}{i}$, it suffices to show that $$\lim\limits_{m\to\infty} r_{i,m}\left(c_1+\frac{1}{c_2+1}\right)=\multiset{c_1+1}{i}$$and $$\lim\limits_{m\to\infty} r_{i,m}\left(c_1+\frac{1}{c_2}\right)=\multiset{c_1+1}{i}.$$More generally, we will show that $$\lim\limits_{m\to\infty} r_{i,m}\left(c_1+\frac{1}{a}\right)=\multiset{c_1+1}{i}$$for all $a\in{\mathbb{N}}$.

Note that by definition, we have $$r_{i,m}\left(c_1+\frac{1}{a}\right)=\frac{\sum_{j=1}^{i+1} \multiset{c_1}{i+1-j}\multiset{a}{m+1-j}}{\multiset{a}{m}}.$$Lastly, if $j = 1$, then $\frac{\multiset{a}{m+1-j}}{\multiset{a}{m}}=1$, and if $j > 1$, we have $$\lim\limits_{m\to\infty}\frac{\multiset{a}{m+1-j}}{\multiset{a}{m}}=\lim\limits_{m\to\infty}\prod_{k=1}^{a-1}\frac{m+1-j+k}{m+k}=1.$$Therefore, the multichoose Hockey Stick Identity gives us $$\lim\limits_{m\to\infty} r_{i,m}\left(c_1+\frac{1}{a}\right)=\sum_{j=1}^{i+1} \multiset{c_1}{i+1-j}=\multiset{c_1+1}{i},$$as desired.

\end{proof}

\medskip

\begin {defn}
    For fixed $m$, and fixed $x \in \Bbb{R}$, define $F_m(x,t) \in \Bbb{R}[t]$ to be the following polynomial:
    \[ F_m(x,t) = \sum_{i=0}^m r_{i,m}(x) t^i \]
\end {defn}

\medskip

Let $N = \lceil x \rceil$. Then Theorem \ref{thm4} says that the $t^i$-coefficient is approximately $r_{im}(x) \approx \multiset{N}{i}$ when $m$ is sufficiently large. Recall that by Newton's generalized binomial theorem,
$$ \frac{1}{(1-t)^{N}} = \sum_{i=0}^\infty \multiset{N}{i} t^i $$
Thus for large $m$, the polynomials $F_m(x,t)$ approximate the rational function $\frac{1}{(1-t)^{N}}$. This can be made more precise:

\medskip

\begin {prop}
    For fixed $x \in \Bbb{R}_{\geq 1}$ with $N-1 < x \leq N$, the sequence of functions $F_m(x,t)$ converges pointwise to $\frac{1}{(1-t)^{N}}$ on the interval $(-1,1)$. That is, $\lim\limits_{m \to \infty}F_m(x,t) = \frac{1}{(1-t)^{N}}$.
\end {prop}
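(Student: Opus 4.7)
The plan is a standard $\epsilon$-split argument. Newton's generalized binomial theorem gives absolute convergence of the target series $\sum_{i \geq 0} \multiset{N}{i} t^i = (1-t)^{-N}$ on $(-1,1)$, Theorem~\ref{thm4} supplies the pointwise convergence $r_{i,m}(x) \to \multiset{N}{i}$ of each coefficient, and Corollary~\ref{cor:bounds} supplies a uniform bound on these coefficients that lets me control the ``middle tail'' of $F_m(x,t)$. When $x = N$ is an integer the claim is immediate, since $F_m(N,t) = \sum_{i=0}^m \multiset{N}{i} t^i$ is already a partial sum of the target series, so I will focus on the generic case $N-1 < x < N$.

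First I would record the key uniform estimate $r_{i,m}(x) \leq \multiset{N}{i}$ for every $m$ and every $1 \leq i \leq m$. Since $x$ is non-integer in $(N-1, N)$, its first partial quotient is $a_1 = \lfloor x \rfloor = N-1$, so Corollary~\ref{cor:bounds} yields the strict inequality $r_{i,m}(x) < \multiset{a_1+1}{i} = \multiset{N}{i}$, as needed.

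Next, fix $t \in (-1,1)$ and $\epsilon > 0$ and, for $m > K$ (with $K$ to be chosen below), decompose
\[ F_m(x,t) - \frac{1}{(1-t)^N} = \sum_{i=0}^K \bigl(r_{i,m}(x) - \multiset{N}{i}\bigr) t^i + \sum_{i=K+1}^m \bigl(r_{i,m}(x) - \multiset{N}{i}\bigr) t^i - \sum_{i=m+1}^\infty \multiset{N}{i} t^i. \]
Absolute convergence of $\sum_{i \geq 0} \multiset{N}{i} |t|^i$ lets me pick $K$ with $\sum_{i > K} \multiset{N}{i} |t|^i < \epsilon/3$, which at once bounds the third sum and, together with the observation $|r_{i,m}(x) - \multiset{N}{i}| \leq \multiset{N}{i}$ from the previous paragraph, also bounds the second sum. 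For the first sum---a \emph{finite} sum with $K+1$ terms---I apply Theorem~\ref{thm4} at each index $i \in \{0,\dots,K\}$ to find $M$ with $|r_{i,m}(x) - \multiset{N}{i}| < \epsilon/(3(K+1))$ for all $m \geq M$; since $|t|^i \leq 1$, the first sum is then less than $\epsilon/3$ in absolute value. Adding the three bounds yields $|F_m(x,t) - (1-t)^{-N}| < \epsilon$ for $m \geq \max(K+1, M)$, which is pointwise convergence.

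There is no serious obstacle: this is essentially ``dominated convergence for polynomial coefficients,'' and both the dominating majorant $\multiset{N}{i}|t|^i$ and the coefficient-wise convergence are already available. The one point worth flagging is that the uniform bound $r_{i,m}(x) \leq \multiset{N}{i}$ must hold for \emph{every} $m$ rather than just asymptotically, which is why Corollary~\ref{cor:bounds}---whose estimate is uniform in $m$---is the essential ingredient.
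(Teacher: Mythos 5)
Your proposal is correct and takes essentially the same route as the paper: both rest on exactly the two ingredients you identify, the uniform-in-$m$ bound $r_{i,m}(x) \leq \multiset{N}{i}$ from Corollary~\ref{cor:bounds} and the coefficientwise limit from Theorem~\ref{thm4}. The only difference is that the paper invokes Tannery's theorem (dominated convergence) directly, whereas you carry out its standard $\epsilon$-split proof by hand.
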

\begin {proof}
    This follows from Lebesgue's dominated convergence theorem (the discrete version is sometimes called Tannery's theorem). Indeed, Corollary \ref{cor:bounds} says that $r_{im}(x) \leq \multiset{N}{i}$, and the series $\sum_i \multiset{N}{i} t^i$ converges. So the dominated convergence theorem gives us that
    \[ \lim_{m \to \infty} F_m(x,t) = \lim_{m \to \infty} \sum_i r_{im}(x) t^i = \sum_i \lim_{m \to \infty} r_{im}(x) t^i, \]
    and by Theorem \ref{thm4} the right-hand side is the binomial series expansion of $\frac{1}{(1-t)^{N}}$.
\end {proof}

\vfill

\bibliographystyle{alpha}
\bibliography{higher_cfs}

\end{document}